\documentclass{article}

\usepackage{tikz-cd}

\usepackage{graphicx, amsmath, amssymb, longtable, mathtools, amsthm, polynom, mathrsfs, cancel}

\usepackage{url}
\usepackage{comment}
\usepackage[table]{xcolor}
\usepackage{booktabs} 

\usepackage[backend=biber,style=numeric]{biblatex}
\usepackage[breaklinks]{hyperref}
\appto\bibfont{\setlength{\emergencystretch}{.1em}}

\newtheorem{theorem}{Theorem} 
\numberwithin{theorem}{section}

\newtheorem{lemma}{Lemma}
\numberwithin{lemma}{section}

\newtheorem{definition}{Definition}
\numberwithin{definition}{section}

\numberwithin{assumption}{section}

\newtheorem{proposition}{Proposition}
\numberwithin{proposition}{section}

\numberwithin{remark}{section}

\newtheorem{corollary}{Corollary}
\numberwithin{corollary}{section}

\numberwithin{conjecture}{section}

\newtheorem{example}{Example}
\numberwithin{example}{section}

\usepackage{algorithm}
\usepackage{algpseudocode} 

\usepackage{multicol, multirow, array, tabularx, caption, float}
\newcolumntype{C}[1]{>{\centering\arraybackslash}p{#1}}
\usepackage{tikz}
\usetikzlibrary{positioning}
\usepackage{subcaption}

\usepackage[letterpaper, portrait, margin=1in]{geometry}
\usepackage{changepage}
\usepackage{array}

\newcommand{\R}{\mathbb{R}}

\newcommand{\N}{\mathbb{N}}

\newcommand{\E}{\mathbb{E}}

\newcommand{\bset}[1]{\left \lbrace #1 \right \rbrace}
\newcommand{\aset}[1]{\left \langle #1 \right \rangle}

\newcommand{\Span}[1]{\mathrm{Span}\bset{#1}}
\newcommand{\id}{\mathrm{id}}

\newcommand{\D}{\displaystyle}
\newcommand{\Cov}{\mathrm{Cov}} 
\newcommand{\intp}[1] {\D \int_\Omega {#1 } \; d \mathbb P}
\newcommand{\HS}{\mathrm{HS}}

\newcommand{\CH}{\mathcal H}

\title{Karhunen Lo\`eve Expansions of Hilbert Space-Valued Random Elements}
\author{Trajan Murphy}
\date{\today}

\begin{document}

\begin{multicols*}{2}
\maketitle

\begin{abstract}
    The Karhunen-Lo\`eve Expansion (KLE) of a stochastic process is a well understood eigenfunction expansion used widely in time series analysis, stochastic PDEs, and signal processing. Karhunen-Lo\`eve expansions have also been proven to exist for other types of stochastic elements whose values lie in certain $L^2$ spaces. This article provides a concise proof about the necessary and sufficient conditions for a function $v$ defined on some sample space $\Omega$ and whose values lie in some Hilbert space $\CH$ to admit an eigenfunction expansion like the well-known KLE. We draw on the existing theory of Bochner spaces and Hilbert-Schmidt spaces and construct an isomorphism between them. Furthermore, this isomorphism is natural, which has important computational consequences. Finally, we demonstrate with an example the computational advantages conferred by considering the KLE in this generalized setting. 
\end{abstract}

\section{Introduction}

The Karhunen-Lo\`eve expansion (KLE) of a stochastic process is a powerful theoretical tool with applications in signal processing, stochastic PDEs, and statistical machine learning. For a stochastic process $v = \bset{v_t}_{t \in \mathcal T}$, $\mathcal T \subseteq R$ a closed interval, it is well known (\cite{Aadithya2018,  Giambartolomei2015, alexanderian2015, chien1967}) that the KLE of $v$ is the infinite sum $v = \E(v_t) + \sum_{r=1}^\infty \lambda_r^{1/2} Y_r \phi_r$, with $Y_r$ the zero mean, unit variance, mutually uncorrelated stochastic components and $\phi_r$ the principal components. 

\parencite{steinwart2017} generalizes somewhat, allowing $\mathcal T$ to be any separable metric space and replacing $\mu$, the Lebesgue measure on $\R$, with a Borel measure $\nu$ on $\mathcal T$. 

In the framework of the above authors, a stochastic process can equivalently be thought of as a function $v: \mathcal T \to L^2(\Omega)$, where $(\Omega, \mathcal F, \mathbb P)$ is a suitable (complete) measure space, often a probability space, and $L^2(\Omega)$ comprises the Kolmogorov quotient space of real-valued, $\mathcal F$-measurable, square-integrable functions on $\Omega$. 

The implicit assumption in the existence of a KLE is that $\int_\mathcal T \int_\Omega |v_t(\omega)|^2 \; d \mathbb P(\omega) d \nu(t) < \infty$. By reversing the roles of the spatial variable $t$ and the stochastic variable $\omega$, we produce a function $v:\Omega \to L^2(\mathcal T)$. Our goal is to generalize the KLE by replacing $L^2(\mathcal T)$ with an arbitrary Hilbert space $\mathcal H$. This has been accomplished somewhat in \cite{castrilloncandas2022, betz2014} by replacing $L^2(\mathcal T)$ with the Hilbert space $L^2(U, \R^Q)$\footnote{The space $L^2(U, \R^Q)$ denotes set of Lebesgue measurable functions $v: U \to \R^Q$ such that $\int_U v(x)^\top v(x) d\mu(x) < \infty$. this space is equipped with the inner product $\aset{u,v}_{L^2(U, \R^Q)} := \int_U  u(x)^\top v(x) d \mu(x)$. Functions which agree $\mu$-a.e. are identified.}, where $U$ is a domain of $\R^n$. As before, we must assume $v: \Omega \to L^2(U, \R^Q)$ has the property that $\E(\|v(\omega)\|_{L^2(U, \R^Q)}^2) < \infty$, in which case we have the expansion

\begin{equation} \label{Preliminary KLE}
    v(\omega) = \E(v) + \sum_{r=1}^\infty \lambda_r^{1/2} Y_r(\omega) \phi_r
\end{equation}

Here $\bset{\lambda_r, \phi_r}_{r=1}^\infty$ constitute the descending eigenpairs of the \textit{covariance operator} $\mathscr K: L^2(U, \R^Q) \to L^2(U, \R^Q)$ given by $\mathscr Kf (x) = \int_U K(x,y) f(y) d \mu(y)$, $\mu$ the Lebesgue measure on $U$, where $K: U \times U \to \R$ is the \textit{covariance kernel} defined by $K(x,y) :=  \E[(v(\omega, x) - \E(v(\omega, x))^\top(v(\omega, y) - \E(v(\omega,y))]$.
$\bset{Y_r}_{r=1}^\infty$ again constitutes the sequence of zero-mean, unit-variance, mutually uncorrelated random variables, computed by $\lambda_r^{-1/2} \int_U (v(\omega,x) - \E(v(\omega,x)) \phi_r(x)\; d \mu(x)$. Note that the space $L^2(U, \R^Q)$ generalizes $L^2(\mathcal T)$ if we take $U = \mathcal T$ and $Q = 1$. \cite{levy1999} reports a generalization of expansion \eqref{Preliminary KLE}  for any Hilbert space of functions $U \to \R^Q$, extending the formulation to Sobolev spaces and spaces with weighted inner products.  

\parencite{Schwab2006} is, to current knowledge, the closest review to a KLE in a general Hilbert space, although there are a few deficiencies. First, each Hilbert space is assumed separable. While \cite{Schwab2006} presents a thorough development for the existence of a KL-like expansion of an element $v$ in the tensor product space of two arbitrary Hilbert spaces $\mathcal H$ and $\mathcal S$ (given as Thm. 2.5), the KLE of an element in $L^2(\Omega \times U)$ is merely left as a proposition (Prop. 2.8). While the isomorphism $L^2(\Omega \times U) \cong L^2(\Omega) \otimes L^2(U)$ is well understood when $L^2(\Omega)$ and $L^2(U)$ are both separable Hilbert spaces \cite{kolmogorov1961}, this correspondence is never explicitly cited in \parencite{Schwab2006}. \cite{picci26} also cites a KLE for the case where $\CH$ is separable. 

To suitably generalize the KLE we must introduce two spaces: $L^2(\Omega, \mathcal H)$ - a space of functions $v: \Omega \to \mathcal H$ and $\text{HS}(\mathcal H, L^2(\Omega))$ - a space of compact linear operators. The existence of a KLE for an arbitray $\mathcal H$-valued random element relies on a natural isomorphism between $L^2(\Omega, \mathcal H)$ and $\text{HS}(\mathcal H, L^2(\Omega))$, and the ability to perform singular value decompositions on operators in $\text{HS}(\mathcal H, L^2(\Omega))$.

\section{The KL Expansion In An Arbitrary Hilbert Space}
\subsection{The Space $L^2(\Omega, \mathcal H)$}

Let $(\Omega, \mathcal F, \mathbb P)$ a complete measure space and $\mathcal H$ a Banach space over $\R$ with $\mathcal H^*$ its continuous dual. If $\mathcal H$ is a Hilbert space, we identify $\mathcal H^*$ with $\mathcal H$. The following definitions are due to \cite{DiestelUhl1977}.

\begin{definition} [Simple Function] \label{Simple Definition}
    A function $v: \Omega \to \mathcal H$ is \textit{simple} if $\mathbb P$-a.e., $v(\omega) = \sum_{n=1}^N \mathbb I_{E_n}(\omega) e_n$, for $E_n \in \mathcal F$, $e_n \in \mathcal H$.
\end{definition}

\begin{definition} [Bochner Measurable]
 We say that $v$ is \textit{Bochner measurable} or \textit{strongly measurable} if there exists a sequence $\bset{v_n}_{n = 1}^\infty$ of simple functions such that $\lim_{n \to \infty} \|v_n(\omega) - v(\omega)\|_\mathcal H = 0$, $\mathbb P$-a.e.
\end{definition}

\begin{definition} [Weakly Measurable]
    We say that $v$ is weakly measurable if the scalar-valued mapping $\omega \mapsto x^*v(\omega)$ is Borel measurable for each continuous linear functional $x^* \in \mathcal H^*$ 
\end{definition}

\begin{definition} [Essentially Separably Valued] \label{Essentially separably valued}
    We say that $v$ is essentially separably valued (ESV), if there exists a null set $\Omega_0 \in \mathcal F$ such that $v(\Omega_0^C)$ is a separable subset of $\mathcal H$. 
\end{definition}

\begin{comment}
\begin{definition} [Countably Valued]
    We say that $v$ is countably valued if $v(\omega) = \sum_{n=1}^\infty \mathbb I_{E_n}(\omega) e_n$ for sequences $\bset{E_n}_{n = 1}^\infty \subseteq \mathcal F$ disjoint, $\bset{e_n}_{n=1}^\infty \in \mathcal H$. 
\end{definition}

As the $\bset{E_n}$ are assumed disjoint, the sum above only has one non-zero entry for each $\omega \in \Omega$, so concerns about convergence can be postponed.

\begin{definition} [$\Gamma$-measurable function]
    Let $\Gamma \subseteq \mathcal H^*$. We say that $v$ is $\Gamma$-measurable if $\omega \mapsto x^*v(\omega)$ is Borel-measurable for all $x^* \in \Gamma$. 
\end{definition} 

\begin{definition}[Norming] \label{Norming}
    We say that $\Gamma \subseteq \mathcal H^*$ is norming if for all $x \in \mathcal H$, we have 

    $$\|x\|_\mathcal H = \sup_{x^* \in \Gamma} \bset{|x^*x|/{\|x^*\|}}$$

    where we define $0/0 = 0$.
\end{definition}

\end{comment}

\begin{theorem} [Pettis' Theorem] \label{Pettis Theorem} 
    The following are equivalent 

    \begin{enumerate} 
        \item $v$ is Bochner measurable 

        \item $v$ is weakly measurable and essentially separably valued


    \end{enumerate}
\end{theorem}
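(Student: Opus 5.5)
For (1) $\Rightarrow$ (2), I take a sequence of simple functions $v_n$ with $v_n \to v$ in norm off a null set $\Omega_0$. Weak measurability is immediate. For each $x^* \in \mathcal H^*$, the scalar function $x^* v_n = \sum_k \mathbb I_{E_k} x^* e_k$ is measurable, and $x^* v_n \to x^* v$ pointwise on $\Omega_0^C$. Completeness of $\mathbb P$ then makes $x^* v$ measurable. For the ESV property, let $D$ be the countable set of all values taken by all the $v_n$. Every $v(\omega)$ with $\omega \in \Omega_0^C$ is a norm limit of points of $D$. Hence $v(\Omega_0^C) \subseteq \overline{\mathrm{span}_{\mathbb Q} D}$, which is separable, and a subset of a separable metric space is separable.

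For (2) $\Rightarrow$ (1), I first discard the null set and replace $\mathcal H$ by the closed subspace $\mathcal H_0 = \overline{\mathrm{span}}\, v(\Omega_0^C)$, which is separable. Let $\{x_k\}$ be a countable dense subset of $\mathcal H_0$. The key lemma is the existence of a countable norming set $\{x_k^*\} \subseteq \mathcal H^*$ with $\|x_k^*\| \le 1$ and $\|x\| = \sup_k |x_k^* x|$ for every $x \in \mathcal H_0$. I obtain it by Hahn--Banach: choose $x_k^*$ with $\|x_k^*\|=1$ and $x_k^* x_k = \|x_k\|$, then extend to $\mathcal H_0$ by density. In the Hilbert case this is simply $x_k^* = x_k/\|x_k\|$. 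With this set, for any fixed $y \in \mathcal H_0$ the map $\omega \mapsto \|v(\omega) - y\| = \sup_k |x_k^* v(\omega) - x_k^* y|$ is a countable supremum of measurable functions, hence measurable.

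Next I build the approximating functions. For each $n$ and $\omega$, let $k_n(\omega)$ be the smallest index $k \le n$ minimizing $\|v(\omega) - x_k\|$. Define $v_n(\omega) = x_{k_n(\omega)}$. Each set $\{k_n = k\}$ is a finite Boolean combination of sets of the form $\{\|v - x_j\| < \|v - x_k\|\}$ and $\{\|v - x_j\| \le \|v - x_k\|\}$, which are measurable by the previous step. So $v_n$ is simple. Because $\{x_k\}$ is dense and $v(\omega) \in \mathcal H_0$, we have $\|v_n(\omega) - v(\omega)\| = \min_{k \le n}\|v(\omega) - x_k\| \to 0$ for every $\omega \in \Omega_0^C$. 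This gives Bochner measurability.

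I expect the main obstacle to be this middle step, turning scalar weak measurability into measurability of $\|v - y\|$. It requires the countable norming family, and the Hahn--Banach construction with a density argument is exactly where separability enters. Everything else is bookkeeping. Completeness of $\mathbb P$ is also used to handle modifications on the null set $\Omega_0$.
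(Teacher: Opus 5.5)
The paper does not prove this statement; it only cites Diestel--Uhl (Ch.~II, Thm.~2). Your proof is correct and self-contained. Its core is the standard one: in (2)$\Rightarrow$(1) the decisive step is a countable norming family $\{x_k^*\}\subseteq\mathcal H^*$, obtained by Hahn--Banach at a dense sequence of the separable subspace. This turns weak measurability into measurability of $\omega\mapsto\|v(\omega)-y\|$, and it is exactly where separability enters.

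Where you genuinely diverge from the cited source is the approximation step. Diestel--Uhl work over a finite measure space. They use the distance functions to build, for each $\varepsilon>0$, a countably-valued measurable function within $\varepsilon$ of $v$, and then pass from these uniform approximants to an a.e.\ limit of simple functions. That last passage is the usual place where finiteness (or $\sigma$-finiteness) of the measure is invoked. Your nearest-point selection among $x_1,\dots,x_n$ (the construction in Hyt\"onen et al.) gives finitely-valued simple functions converging pointwise on $\Omega_0^C$ in one step. It needs no Egorov-type argument and no finiteness of $\mathbb P$, so it fits the paper better, since the paper only assumes $(\Omega,\mathcal F,\mathbb P)$ is a complete measure space.

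One bookkeeping point in (1)$\Rightarrow$(2): the paper's simple functions have the form $\sum_n\mathbb I_{E_n}e_n$ only almost everywhere. Before collecting the values of the $v_n$ into $D$, enlarge $\Omega_0$ by the countably many exceptional null sets, or replace each $v_n$ by a genuinely simple representative. Then $v(\Omega_0^C)\subseteq\overline D$ directly, and the rational span is unnecessary.
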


The well known Pettis' Theorem can be found as Ch.2 Thm 2. of \cite{DiestelUhl1977}.



\subsection{The Expectation Operator}
\label{The Expectation Operator}

Suppose $v$ is Bochner measurable and $\bset{v_k}_{k=1}^\infty$ is a sequence of simple functions converging $\mathbb P$-a.e. to $v$. Since each $v_k$ is simple, we may write it as 

$$v_k = \sum_{n = 1}^{N_k} \mathbb I_{E^k_n}(\omega)e^n_k$$

The Bochner integral of a simple function over $E$ is defined to be the vector $\sum_{n = 1}^{N_k} \mathbb P(E^k_n \cap E)e^n_k$ for each measurable $E$ and is denoted $\int_E v_k(\omega) \; d \mathbb P(\omega)$ or simply $\int_E v_k \; d \mathbb P$. If it is also true that the scalar-valued integral $\int_E \|v_k - v\| d \mathbb P \to 0$ as $k \to \infty$ for each $E \in \mathcal F$, then we will say that $v$ is \textit{Bochner integrable} \cite{DiestelUhl1977}. The Bochner integral of $v$ over $E$ is then defined to be the limit as $k \to \infty$ of the sequence $\bset{ \int_E v_k \; d \mathbb P}_{k=1}^\infty$ \cite{DiestelUhl1977} and is still written $ \int_E v \; d \mathbb P$. If $E = \Omega$, then we write $\E(v) = \int_\Omega v \; d \mathbb P$. 

Given $1 < p < \infty$ the space of all Bochner measurable functions $v$ such that $\int_\Omega \|v(\omega)\|^p_\mathcal H \; d \mathbb P < \infty$ is denoted $\mathcal L^p(\Omega, \mathcal H)$. The space $L^p(\Omega, \mathcal H)$ is the quotient of $\mathcal L^p(\Omega, \mathcal H)$ by the space of zero $\mathbb P$-a.e. functions. $L^p(\Omega, \mathcal H)$ is a Banach space if $p \in [1, \infty)$ and is equipped with the norm $ \|v\|_{L^p(\Omega, \mathcal H)} = \left(\int_\Omega \|v(\omega)\|^p_\mathcal H \; d \mathbb P \right)^{1/p}$. If $p = 2$ and $\CH$ is a Hilbert space, then $L^2(\Omega, \mathcal H)$ is a Hilbert space with inner product $\aset{u,v}_{L^2(\Omega, \mathcal H)} := \int_\Omega \aset{u(\omega), v(\omega)}_{\mathcal H} \; d \mathbb P$ \cite{DiestelUhl1977}.










\begin{definition} \label{Definition of KLE of v}
Suppose $\mathcal H$ is a Hilbert space and $v$ is Bochner integrable. We will say that $v$ admits a KLE if 

\begin{equation} \label{KLE of v} 
    v = \E(v) + \sum_{r=1}^R \lambda_r^{1/2} Y_r \phi_r
\end{equation}

where the following properties hold 

\begin{enumerate}
    \item $R \in \N \cup \bset{\aleph_0}$  and if $R = \aleph_0$ the expansion \eqref{KLE of v} converges in the $L^2(\Omega, \mathcal H)$ norm. 


    \item $\lambda_r \in (0, \infty)$,  $\lambda_1 \geq \lambda_2 \geq \dots$ and $\sum_{r=1}^R \lambda_r = \|v - \E(v)\|^2_{L^2(\Omega, \CH)}$ 

    \item $\bset{Y_r}_{r=1}^R$ is a zero-mean orthonormal set in $L^2(\Omega)$ 

    \item $\bset{\phi_r}_{r=1}^R$ is an orthonormal set in $\mathcal H$


\end{enumerate}

\end{definition}

\subsection{The Space $\HS(\mathcal H, L^2(\Omega))$} \label{Background on Hilbert-Schmidt Operators}


From this point on, unless specified otherwise, $\CH$ denotes a Hilbert space. Let $\mathcal H, \mathcal G$ be two Hilbert spaces. Let $H \in \mathcal B(\mathcal H, \mathcal G)$. $H$ is said to be \textit{Hilbert-Schmidt} if

$$\sum_{e \in \mathscr E} \|He\|^2_\mathcal G < \infty$$

for arbitrary orthonormal basis (ONB) $\mathscr E$ of $\mathcal H$. Of course, this sum is defined iff $He = 0$ for all but at most countably many $e \in \mathscr E$. We denote the set of all Hilbert-Schmidt operators $\mathcal H \to \mathcal G$ by $\HS(\mathcal H, \mathcal G)$. It is routine to prove that $\HS(\mathcal H, \mathcal G)$ is a vector subspace of $\mathcal B(\mathcal H, \mathcal G)$ (and is proven so in \cite{Berberian2014}). Much of the current literature on Hilbert-Schmidt operators focuses on those which are self-adjoint and positive (\cite{Muger2022,kth,Shapiro2024}), endomorphisms on separable Hilbert spaces (\cite{kth,WangUSTC,Rosenweig,Melrose2018,Shapiro2024}), or integral operators (\cite{Garrett2012}). Eventually, we will produce a natural isomorphism between $L^2(\Omega, \CH)$ and $\HS(\CH, L^2(\Omega))$, but in order to do so, we must generalize the properties of Hilbert-Schmidt operators here. 


 \begin{theorem} \label{HS opearators are compact}
     $\mathrm{HS}(\mathcal H, \mathcal G)$ is a vector subspace of $\mathcal B_0(\mathcal H, \mathcal G)$ where the latter denotes the space of compact linear operators $\mathcal H \to \mathcal G$. 
 \end{theorem}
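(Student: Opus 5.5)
Since the paper already records (citing \cite{Berberian2014}) that $\HS(\mathcal H, \mathcal G)$ is a vector subspace of $\mathcal B(\mathcal H, \mathcal G)$, and $\mathcal B_0(\mathcal H, \mathcal G)$ is itself a subspace of $\mathcal B(\mathcal H, \mathcal G)$, it suffices to show the set inclusion $\HS(\mathcal H, \mathcal G) \subseteq \mathcal B_0(\mathcal H, \mathcal G)$: a linear subspace of $\mathcal B$ contained in another subspace is a subspace of it. My plan is to show that every $H \in \HS(\mathcal H, \mathcal G)$ is an operator-norm limit of finite-rank operators. Then I invoke the standard fact that finite-rank bounded operators are compact and that $\mathcal B_0(\mathcal H, \mathcal G)$ is closed in operator norm, since $\mathcal G$ is complete.

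Fix an orthonormal basis $\mathscr E$ of $\mathcal H$; $\mathcal H$ need not be separable. The Hilbert--Schmidt condition $\sum_{e \in \mathscr E}\|He\|^2_{\mathcal G} < \infty$ forces the set $\mathscr E_0 = \bset{e \in \mathscr E : He \neq 0}$ to be at most countable, as the paper notes. Enumerate it as $e_1, e_2, \dots$; if it is finite, $H$ is already of finite rank and we are done. For each $N$, let $P_N$ be the orthogonal projection of $\mathcal H$ onto $\Span{e_1,\dots,e_N}$ and set $H_N = H P_N$, which is bounded and of finite rank.

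The key estimate is $\|H - H_N\| \to 0$. For $x \in \mathcal H$ we expand $x = \sum_{e \in \mathscr E}\aset{x,e}e$, a sum with only countably many nonzero terms that converges in $\mathcal H$. By continuity and linearity of $H$, and since $He = 0$ for $e \notin \mathscr E_0$, we get $(H - H_N)x = \sum_{n > N}\aset{x,e_n}He_n$. Cauchy--Schwarz then gives
$$\|(H-H_N)x\|_{\mathcal G} \le \Big(\sum_{n>N}|\aset{x,e_n}|^2\Big)^{1/2}\Big(\sum_{n>N}\|He_n\|^2_{\mathcal G}\Big)^{1/2} \le \|x\|_{\mathcal H}\Big(\sum_{n>N}\|He_n\|^2_{\mathcal G}\Big)^{1/2},$$
using Bessel's inequality. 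The right-hand factor is the tail of a convergent series, so $\|H - H_N\| \to 0$.

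The only delicate point is the non-separable setting. One must justify that $H$ may be applied termwise to the possibly uncountable (but effectively countable) basis expansion of $x$, and that the terms with $e \notin \mathscr E_0$ vanish. This follows from continuity of $H$ applied to the convergent partial sums over the countable support of $\aset{x,\cdot}$. Together with the closedness of the compact operators, this completes the argument.
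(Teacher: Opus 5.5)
Your proof is correct and follows essentially the same route as the paper. Both restrict to the countably many basis vectors $e_n$ with $He_n \neq 0$, approximate $H$ by the finite-rank $H_N x = \sum_{n \le N}\aset{x,e_n}He_n$, and bound $\|H-H_N\|$ by the tail $\bigl(\sum_{n>N}\|He_n\|_{\mathcal G}^2\bigr)^{1/2}$ via Cauchy--Schwarz and Bessel; your extra remarks (reducing the subspace claim to the inclusion, closedness of $\mathcal B_0(\mathcal H,\mathcal G)$, and applying $H$ termwise in the non-separable case) just make explicit steps the paper leaves implicit.
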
 

 \begin{proof} 
Let $H \in \HS(\mathcal H, \mathcal G$), let $\mathscr E$ be an ONB of $\mathcal H$. If $He \neq 0$ for only finitely many $e \in \mathscr E$, then $H$ is finite rank and therefore compact. For the more interesting case, suppose $\bset{e_n}_{n=1}^\infty$ is an enumeration of the unit vectors in $\mathscr E$ for which $He \neq 0$. For each $n \in \N$ define $H_N$ by $H_Nx = \sum_{n=1}^N \aset{x, e_n}_\CH He_n$ for all $x \in \CH$. Each $H_N$ is of finite rank, its image lying in $\Span{He_n}_{n=1}^N$. We demonstrate that $\lim_{N\to \infty} \|H - H_N\|_\mathcal G = 0$ ($\|\cdot\|_\mathcal G$ denotes the operator norm on $\mathcal B(\mathcal H, \mathcal G)$) as follows: 

Let $x \in \CH$ be a unit vector. We have 

\begin{align*}
	\|(H - H_N)x\|_\mathcal G &= 
	\left \|	\sum_{n > N} \aset{x, e_n}_\CH He_n  \right \|_\mathcal G
	\\&\leq
	\left( \sum_{n > N} \aset{x, e_n}_\CH^2 \right)^{1/2} \left( \sum_{n >N}\| He_n \|_\mathcal G^2 \right)^{1/2}
	\\&\leq
	\left( \sum_{n >N}\| He_n \|_\mathcal G^2 \right)^{1/2}
\end{align*}

The final term goes to zero independently of $x$ as $N \to \infty$. Thus, $\bset{H_N}_{n = 1}^\infty$ is a sequence of finite rank operators approximating $H$ in the operator norm, which makes $H$ compact. 
 \end{proof}

By the spectral theorem, we can write the singular value decomposition (SVD) of each $H \in \HS(\mathcal H, \mathcal G)$ as $H = \sum_{r=1}^R \sigma_r g_r \widehat \otimes h_r$ where $R = \text{rank}(H)$; $\bset{\sigma_r}_{r=1}^R$ is a decreasing, positive, sequence which approaches 0 if $R = \aleph_0$; $\bset{g_r}_{r=1}^R$ is an orthonormal set in $\mathcal G$; and $\bset{h_r}_{r=1}^R$ is an  orthonormal set in $\CH$. Here $\widehat \otimes$ denotes the outer product of two vectors, i.e. $g \widehat \otimes h$ is defined to be the rank one operator $\mathcal H \to \mathcal G$ given by $x \mapsto \aset{x,h}_\mathcal H g$ for all $x \in \CH$. The SVD of $H$ allows us to make the following claim: 

\begin{lemma} \label{HS norm is independent of ONB}
Let $H \in \HS(\mathcal H, \mathcal G)$. Then for any two ONB $\mathscr E$, $\mathscr F$ of $\CH$, $\sum_{e \in \mathscr E} \|He\|_\mathcal G^2 = \sum_{f \in \mathscr F} \|Hf\|_\mathcal G^2$
\end{lemma}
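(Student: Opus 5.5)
Following the remark that precedes the statement, the plan is to use the SVD $H = \sum_{r=1}^R \sigma_r g_r \widehat\otimes h_r$, available for $H$ by Theorem \ref{HS opearators are compact} and the spectral theorem. We show that for \emph{any} ONB $\mathscr E$ of $\CH$,
$$\sum_{e \in \mathscr E} \|He\|_\mathcal G^2 = \sum_{r=1}^R \sigma_r^2 .$$
The right-hand side does not depend on $\mathscr E$, so applying this to $\mathscr E$ and to $\mathscr F$ gives the lemma.

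\textbf{Step 1.} Fix $e \in \mathscr E$. Since $\{g_r\}$ is orthonormal in $\mathcal G$, Parseval's identity (or Pythagoras together with the norm convergence of the SVD series) gives
$$\|He\|_\mathcal G^2 = \sum_{r=1}^R \sigma_r^2 |\aset{e,h_r}_\CH|^2 .$$
When $R=\aleph_0$, we evaluate the partial-sum operators $H_N=\sum_{r\le N}\sigma_r g_r\widehat\otimes h_r$ at $e$. These converge to $H$ in operator norm, so $H_N e \to He$, and the identity follows by passing to the limit.

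\textbf{Step 2.} Sum over $e \in \mathscr E$ and interchange the two sums. All terms are nonnegative, so the interchange is justified by Tonelli's theorem for counting measure; this works even though $\mathscr E$ may be uncountable, because an unordered sum of nonnegative terms over an arbitrary index set is a supremum over finite subsets. We obtain
$$\sum_{e\in\mathscr E}\|He\|^2 = \sum_{r=1}^R \sigma_r^2 \sum_{e \in \mathscr E} |\aset{h_r,e}|^2 = \sum_{r=1}^R \sigma_r^2 ,$$
where the last equality is Parseval's identity in $\CH$ applied to the unit vector $h_r$, valid for an arbitrary, possibly uncountable, ONB. The equality also shows that finiteness of the sum does not depend on the basis chosen.

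\textbf{Main obstacle.} The delicate points are the nonseparable bookkeeping and the use of the SVD. For the first, one should state explicitly that unordered sums of nonnegative terms can be rearranged freely, and that only countably many $e$ have $\aset{e,h_r}\ne 0$ for some $r$. For the second, the SVD convergence must be used in operator norm (or at least pointwise), which is what justifies Step 1.

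A fallback route avoids the SVD altogether. Expand $\|He\|^2=\sum_{f\in\mathscr F}|\aset{He,f'}|^2$ over an ONB of $\mathcal G$, pass to the adjoint, and swap the sums to get $\sum_e\|He\|^2=\sum_{f'}\|H^*f'\|^2$. The right-hand side is independent of $\mathscr E$, which again proves the lemma.
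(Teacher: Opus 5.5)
Your proposal is correct and follows the paper's proof essentially step for step. You write $H$ via its SVD, use the orthonormality of the $g_r$ to get $\|He\|_{\mathcal G}^2=\sum_r \sigma_r^2\aset{e,h_r}_{\CH}^2$, interchange the nonnegative double sum, and apply Parseval to each unit vector $h_r$ to obtain $\sum_r\sigma_r^2$. The only differences are minor: you handle a possibly uncountable $\mathscr E$ with unordered sums of nonnegative terms, where the paper restricts to the countable set $\mathscr E_0$ of basis vectors with $\aset{e,h_r}_\CH \neq 0$ for some $r$; and your adjoint-based fallback is a valid SVD-free alternative but is not needed.
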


\begin{proof}
	Let $\sum_{r=1}^R \sigma_r g_r \widehat \otimes h_r$ be the SVD of $H$. The statement of Lemma \ref{HS norm is independent of ONB} holds if, for arbitrarily chosen ONB $\mathscr E$ of $\CH$, $\sum_{r=1}^R \sigma_r^2  = \sum_{e \in \mathscr E} \|He\|_\mathcal G^2$. Let 
	
	\begin{align*}
		\mathscr E_0 &:= \bset{e \in \mathscr E: \aset{e,h_r}_\CH \neq 0 \text{ for some } h_r}
		\\&= \bigcup_{r=1}^R \bset{e \in \mathscr E: \aset{e, h_r}_\CH \neq 0}
	\end{align*}
	
	$\mathscr E_0$ is countable. We have 
	
	\begin{align*}
		\sum_{e \in \mathscr E} \|He\|_\mathcal G^2
		&= 
		\sum_{e \in \mathscr E} \left \|\sum_{r=1}^R \sigma_r \aset{e, h_r}_\CH g_r \right \|_\mathcal G^2
		\\&= 
		\sum_{e \in \mathscr E_0} \sum_{r=1}^R \sigma_r^2 \aset{e, h_r}_\CH^2 
			\\&= 
	\sum_{r=1}^R \sum_{e \in \mathscr E_0}  \sigma_r^2 \aset{e, h_r}_\CH^2 
		\\&= 
	\sum_{r=1}^R   \sigma_r^2  
	\end{align*}

	where the interchange of sums is justified since the terms $\sigma_r^2 \aset{e, h_r}_\CH^2 $ are non-negative. 

	
\end{proof}

Lemma \ref{HS norm is independent of ONB} demonstrates that the value of $\sum_{e \in \mathscr E} \|He\|^2_\mathcal G$ does not depend on the ONB $\mathscr E$. Thus we may unambiguously define the \textit{Hilbert-Schmidt norm} of a Hilbert-Schmidt operator as $\|H\|_{\HS(\mathcal H, \mathcal G)} :=  \left( \sum_{e \in \mathscr E} \|He\|^2_\mathcal G \right)^{1/2}$ for any choice of orthonormal basis $\mathscr E$ of $\mathcal H$. This norm can be easily seen to be generated by the Hilbert-Schmidt inner product $\aset{H_1, H_2}_{\HS(\mathcal H, \mathcal G)} = \sum_{e \in \mathscr E} \aset{H_1e, H_2e}_\mathcal G$.

\begin{corollary}
    If $H \in \mathrm{HS}(\mathcal H, \mathcal G)$, then $\|H\|^2_{ \HS(\mathcal H, \mathcal G)}$ equals the sum of the squares of the singular values of $H$. 
\end{corollary}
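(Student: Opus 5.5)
This corollary is a direct read-off from the computation in the proof of Lemma \ref{HS norm is independent of ONB}. I would start from the SVD $H = \sum_{r=1}^R \sigma_r g_r \widehat\otimes h_r$ supplied by the spectral theorem for the compact operator $H$; compactness comes from Theorem \ref{HS opearators are compact}. Fix an arbitrary ONB $\mathscr E$ of $\CH$. By definition, $\|H\|^2_{\HS(\mathcal H,\mathcal G)} = \sum_{e\in\mathscr E}\|He\|^2_{\mathcal G}$, and Lemma \ref{HS norm is independent of ONB} guarantees this value does not depend on $\mathscr E$. So it suffices to evaluate the sum for one basis.

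The cleanest route avoids re-running the interchange-of-sums argument. Since $\bset{h_r}_{r=1}^R$ is orthonormal in $\CH$, Zorn's lemma lets us extend it to an ONB $\mathscr E = \bset{h_r}_{r=1}^R \cup \mathscr E'$ with $\mathscr E' \perp h_r$ for every $r$. For $e \in \mathscr E'$ we have $\aset{e,h_r}_\CH = 0$ for all $r$, so $He = 0$. For $e = h_s$, orthonormality of the $h_r$ gives $He = \sigma_s g_s$, hence $\|Hh_s\|^2_{\mathcal G} = \sigma_s^2$ because $\|g_s\|_{\mathcal G}=1$.

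Summing over this basis gives $\|H\|^2_{\HS(\mathcal H,\mathcal G)} = \sum_{r=1}^R \sigma_r^2$. Equivalently, one can simply quote the chain of equalities in the proof of Lemma \ref{HS norm is independent of ONB}, which already concludes $\sum_{e\in\mathscr E}\|He\|^2_{\mathcal G} = \sum_{r=1}^R\sigma_r^2$ for an arbitrary ONB.

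The only step needing care is the case $R=\aleph_0$. There, evaluating $Hh_s$ uses the convergent series representation of $H$: the SVD partial sums converge in operator norm, so $Hh_s = \lim_N \sum_{r\le N}\sigma_r\aset{h_s,h_r}_\CH g_r = \sigma_s g_s$. Given that, the argument is routine.
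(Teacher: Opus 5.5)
Your proposal is correct and is essentially the paper's argument: the corollary is read off from the computation in the proof of Lemma \ref{HS norm is independent of ONB}, which already gives $\sum_{e\in\mathscr E}\|He\|_{\mathcal G}^2=\sum_{r=1}^R\sigma_r^2$ for every ONB $\mathscr E$ of $\mathcal H$. Your evaluation on an ONB extending $\bset{h_r}_{r=1}^R$ is the special case of that computation in which $\mathscr E_0=\bset{h_r}_{r=1}^R$ and the double sum collapses to its diagonal; basis-independence is still supplied by the lemma.
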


The following properties are proven in \cite{Berberian2014}

\begin{proposition} \label{properties of the hilbert-schmidt inner product}
$\empty$

    \begin{enumerate}
        \item $\aset{H_1, H_2}_{\HS(\mathcal H, \mathcal G)} := \sum_{e \in \mathscr E} \aset{H_1e, H_2e}_\mathcal G$ defines an inner product on $\HS(\mathcal H, \mathcal G)$ which generates the Hilbert-Schmidt norm. 

        \item $\HS(\mathcal H, \mathcal G)$ is complete w.r.t. this inner product.


    \end{enumerate}
\end{proposition}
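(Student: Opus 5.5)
We have to prove both parts of the Proposition: the formula defines an inner product generating the Hilbert--Schmidt norm, and $\HS(\mathcal H, \mathcal G)$ is complete under it.

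\textbf{Part 1.} Fix an ONB $\mathscr E$ of $\CH$. First we check that the sum defining $\aset{H_1,H_2}_{\HS(\mathcal H, \mathcal G)}$ converges absolutely. Each $H_i$ is Hilbert--Schmidt, so $H_ie \neq 0$ for only countably many $e \in \mathscr E$. Hence only countably many terms $\aset{H_1e,H_2e}_\mathcal G$ are nonzero. By Cauchy--Schwarz in $\mathcal G$ and then in $\ell^2$,
$$\sum_{e}|\aset{H_1e,H_2e}_\mathcal G| \le \Big(\sum_e\|H_1e\|^2_\mathcal G\Big)^{1/2}\Big(\sum_e\|H_2e\|^2_\mathcal G\Big)^{1/2} < \infty.$$
Bilinearity and symmetry then follow termwise from those of $\aset{\cdot,\cdot}_\mathcal G$, since the sums converge absolutely.

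Setting $H_1 = H_2 = H$ gives $\aset{H,H}_{\HS(\mathcal H, \mathcal G)} = \|H\|^2_{\HS(\mathcal H, \mathcal G)}$. This quantity is independent of $\mathscr E$ by Lemma \ref{HS norm is independent of ONB}. By polarization, the inner product itself is then independent of $\mathscr E$.

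For definiteness, suppose $\|H\|_{\HS(\mathcal H, \mathcal G)} = 0$. Then $He = 0$ for every $e \in \mathscr E$. Since $H$ is bounded and $\mathscr E$ spans a dense subspace of $\CH$, this gives $H = 0$.

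\textbf{Part 2.} The key estimate is $\|H\|_{\mathcal B(\mathcal H, \mathcal G)} \le \|H\|_{\HS(\mathcal H, \mathcal G)}$. To prove it, take a unit vector $x$ and extend it to an ONB $\mathscr E$ of $\CH$. Then $\|Hx\|^2_\mathcal G$ is one of the terms of $\sum_{e\in\mathscr E}\|He\|^2_\mathcal G$, so it is at most the full sum. Alternatively, one can read the estimate off the SVD via the Corollary, since $\sigma_1 \le (\sum_r \sigma_r^2)^{1/2}$.

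Now let $\{H_n\}$ be Cauchy in the HS norm. By the estimate it is Cauchy in operator norm. Since $\mathcal B(\mathcal H, \mathcal G)$ is complete, $H_n \to H$ in operator norm, and in particular $H_ne \to He$ in $\mathcal G$ for each $e \in \mathscr E$.

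It remains to show $H \in \HS(\mathcal H, \mathcal G)$ and $\|H_n - H\|_{\HS(\mathcal H, \mathcal G)} \to 0$. Fix $\epsilon > 0$ and choose $N$ such that $\|H_n - H_m\|_{\HS(\mathcal H, \mathcal G)} < \epsilon$ for $n,m \ge N$. For any finite $F \subseteq \mathscr E$,
$$\sum_{e\in F}\|(H_n-H_m)e\|^2_\mathcal G < \epsilon^2.$$
Let $m\to\infty$ in this finite sum to get $\sum_{e\in F}\|(H_n-H)e\|^2_\mathcal G \le \epsilon^2$. Then take the supremum over all finite $F$ to obtain $\sum_{e\in\mathscr E}\|(H_n-H)e\|^2_\mathcal G \le \epsilon^2$.

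This bound shows two things. First, $H_n - H$ is Hilbert--Schmidt for $n \ge N$. Since $\HS(\mathcal H, \mathcal G)$ is a vector space, $H = H_n - (H_n - H)$ is Hilbert--Schmidt as well; in particular $He\ne 0$ for at most countably many $e$. Second, $\|H_n - H\|_{\HS(\mathcal H, \mathcal G)} \le \epsilon$ for all $n \ge N$, which is the required convergence.

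The step needing the most care is this Fatou-type passage to the limit over a possibly uncountable basis. Restricting to finite subsets $F$ before letting $m\to\infty$, and only then taking the supremum, avoids any interchange of limits.
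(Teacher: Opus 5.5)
Your proof is correct. It is not a variant of an argument in the paper, because the paper gives no argument for this proposition: it simply defers both parts to \cite{Berberian2014}. The citation buys brevity. Your argument buys a self-contained proof that uses only the paper's own machinery and works in the nonseparable setting the paper cares about. Basis-independence of the inner product comes from Lemma~\ref{HS norm is independent of ONB} plus polarization. The estimate $\|H\|_{\mathcal B(\CH,\mathcal G)}\le\|H\|_{\HS(\CH,\mathcal G)}$, obtained by extending a unit vector to an ONB via Zorn's lemma, reduces completeness to completeness of $\mathcal B(\CH,\mathcal G)$. Passing to the limit only in finite sums over $F\subseteq\mathscr E$, and only then taking the supremum, handles an uncountable basis without any interchange of limits.

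One point you should state explicitly. The paper's definition asks for $\sum_{e\in\mathscr E}\|He\|^2_{\mathcal G}<\infty$ for an arbitrary ONB, but your Fatou bound is derived for the fixed $\mathscr E$ of Part 1. This is harmless: $N$ was chosen via the basis-free quantity $\|H_n-H_m\|_{\HS(\CH,\mathcal G)}$, so the bound $\sum_{e\in\mathscr E}\|(H_n-H)e\|^2_{\mathcal G}\le\epsilon^2$ holds for every ONB simultaneously. Alternatively, the proof of Theorem~\ref{HS opearators are compact} uses finiteness over only one basis, after which Lemma~\ref{HS norm is independent of ONB} covers all bases. With that remark added, $H_n-H\in\HS(\CH,\mathcal G)$ under either reading of the definition.
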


With the above information, we can identify $L^2(\Omega, \mathcal H)$ with a subspace of $\HS(\mathcal H, L^2(\Omega))$ from Theorem \ref{v induces a HS operator} below. We will soon see that in fact $L^2(\Omega, \mathcal H)$ is isometrically isomorphic to $\HS(\mathcal H, L^2(\Omega))$.

\begin{theorem} \label{v induces a HS operator}
If $v \in L^2(\Omega, \mathcal H)$, then the operator $H_v: \mathcal H \to L^2(\Omega)$ defined by $(H_vx)(\omega) = \aset{v(\omega), x}_\mathcal H$ is Hilbert-Schmidt.
\end{theorem}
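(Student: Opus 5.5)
We need to show three things about $H_v$: that it is well defined as a map into $L^2(\Omega)$, that it is a bounded linear operator, and that $\sum_{e \in \mathscr E}\|H_ve\|^2_{L^2(\Omega)} < \infty$ for some (equivalently, by Lemma \ref{HS norm is independent of ONB}, any) ONB $\mathscr E$ of $\CH$. The plan is to prove the identity $\|H_v\|_{\HS(\CH, L^2(\Omega))} = \|v\|_{L^2(\Omega,\CH)}$ by an interchange of sum and integral. This identity also sets up the isometry that the paper announces.

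\textbf{Well-definedness and boundedness.} Fix $x \in \CH$. Since $v$ is Bochner measurable, Pettis' Theorem (Theorem \ref{Pettis Theorem}) shows that $v$ is weakly measurable. Hence $\omega \mapsto \aset{v(\omega),x}_\CH$ is measurable. By Cauchy--Schwarz, $|\aset{v(\omega),x}_\CH|^2 \le \|v(\omega)\|_\CH^2\|x\|_\CH^2$. Integrating gives $\|H_vx\|_{L^2(\Omega)} \le \|v\|_{L^2(\Omega,\CH)}\|x\|_\CH$, so $H_vx \in L^2(\Omega)$ and $H_v \in \mathcal B(\CH, L^2(\Omega))$. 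Changing $v$ on a null set changes $H_vx$ only on a null set, so $H_v$ depends only on the class of $v$. Linearity in $x$ is immediate.

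\textbf{The Hilbert--Schmidt sum.} Fix an ONB $\mathscr E$ of $\CH$. Because $\mathscr E$ may be uncountable, we first reduce to a countable index set. By Pettis' Theorem, $v$ is essentially separably valued: there is a null set $\Omega_0$ with $v(\Omega_0^C)$ separable. Let $\mathcal S$ be the closed span of $v(\Omega_0^C)$, which is a separable closed subspace of $\CH$. Each element of a countable dense subset of $\mathcal S$ has nonzero inner product with only countably many $e \in \mathscr E$. Therefore the set $\mathscr E_0 := \bset{e \in \mathscr E : \aset{s,e}_\CH \ne 0 \text{ for some } s \in \mathcal S}$ is countable. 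For $e \notin \mathscr E_0$ we have $H_ve = 0$ a.e. For $\omega \in \Omega_0^C$, Parseval gives $\|v(\omega)\|_\CH^2 = \sum_{e\in\mathscr E_0}\aset{v(\omega),e}_\CH^2$. Every term is nonnegative and the sum is countable, so Tonelli (monotone convergence) lets us interchange the sum and the integral:
\begin{align*}
\sum_{e\in\mathscr E}\|H_ve\|^2_{L^2(\Omega)} = \sum_{e\in\mathscr E_0}\int_\Omega \aset{v(\omega),e}_\CH^2\,d\mathbb P = \int_\Omega \|v(\omega)\|_\CH^2\,d\mathbb P = \|v\|^2_{L^2(\Omega,\CH)} < \infty .
\end{align*}
Thus $H_v$ is Hilbert--Schmidt.

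\textbf{Main obstacle.} The delicate step is the countability reduction when $\CH$ is nonseparable. Without it, Parseval produces an uncountable sum of functions, and exchanging it with the integral is not justified directly. Essential separable-valuedness from Pettis' Theorem is exactly what confines everything to the countable family $\mathscr E_0$. Once that reduction is in place, the rest is Cauchy--Schwarz and monotone convergence.
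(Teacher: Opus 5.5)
Your proposal is correct and follows essentially the paper's argument: essential separable-valuedness confines everything to countably many basis vectors, Parseval is applied pointwise, and the sum and integral are interchanged to obtain $\|H_v\|_{\HS(\CH, L^2(\Omega))} = \|v\|_{L^2(\Omega,\CH)}$. The paper instead redefines $v$ on a null set and uses an ONB adapted to $\overline{\Span{v(\Omega)}}$, justifying the interchange by dominated convergence, while you treat an arbitrary ONB through the countable set $\mathscr E_0$ and monotone convergence, which also removes any need for Lemma \ref{HS norm is independent of ONB} to get basis independence.
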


\begin{proof}
    As $v$ is ESV, we may, if necessary, redefine $v$ on a null set such that $v(\Omega)$ is separable. Put $\bset{e_n}_{n=1}^N$ an orthonormal basis of $\mathcal V := \overline{\Span{v(\Omega)}}$ where $N = \dim(\mathcal V) \in \N \cup \bset{\aleph_0}$. Let $\mathscr E$ be an extension of $\bset{e_n}_{n=1}^N$ to an ONB of $\mathcal H$. (This could be done by letting $\mathscr E'$ be an ONB of $\mathcal V^\perp$ and then letting $\mathscr E = \bset{e_n}_{n=1}^N \cup \mathscr E'$.)

   By the weak measurability of $v$ and the Cauchy-Schwarz inequality, $H_v$ does actually map each element of $\mathcal H$ to an element of $L^2(\Omega)$. Hence we have

   \begin{align*}
       \sum_{e \in \mathscr E} \|H_ve\|_{L^2(\Omega)}^2
       &= 
       \sum_{n=1}^N \|\aset{v, e_n}_\mathcal H\|^2_{L^2(\Omega)}
       \\&= 
        \sum_{n=1}^N \intp{\aset{v(\omega), e_n}^2_\mathcal H}
        \\&= 
        \intp{\sum_{n=1}^N \aset{v(\omega), e_n}^2_\mathcal H}
         \\&= 
        \intp{ \|v(\omega)\|^2_\mathcal H}
   \end{align*}
   
  If $N = \aleph_0$, then the interchange of integration and summation is justified by the Dominated Convergence since Bessel's inequality implies $\sum_{i=1}^n \aset{v(\omega), e_i}_\mathcal H^2 \leq \|v(\omega)\|^2_\mathcal H$ $\mathbb P-$a.e. for all $n \in \N$ and $\lim_{n \to \infty} \sum_{i=1}^n \aset{v(\omega), e_i}_\mathcal H^2 = \|v(\omega)\|^2_\mathcal H$ $\mathbb P-$a.e. Thus $\|H_v\|_{\HS(\mathcal H, L^2(\Omega))} = \|v\|_{L^2(\Omega, \CH)} < \infty$. 
\end{proof}

\begin{corollary} \label{Bochner norm equals HS norm equals sum of svsq}
    The correspondence $v \mapsto H_v$ is an isometry from $L^2(\Omega, \CH)$ to $\mathrm{HS}(\CH, L^2(\Omega))$. 
\end{corollary}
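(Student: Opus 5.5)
The proof assembles three facts: the map $v \mapsto H_v$ is well defined on equivalence classes, it is linear, and it preserves norms. The first two are routine, and the third is essentially Theorem \ref{v induces a HS operator}.

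\emph{Well-definedness.} Suppose $v = w$ $\mathbb P$-a.e. Then for each $x \in \CH$ we have $\aset{v(\omega),x}_\CH = \aset{w(\omega),x}_\CH$ $\mathbb P$-a.e. So $H_v x = H_w x$ as elements of $L^2(\Omega)$, and $H_v$ depends only on the class of $v$ in $L^2(\Omega,\CH)$. The proof of Theorem \ref{v induces a HS operator} already noted that $H_v x$ is measurable, by weak measurability (Theorem \ref{Pettis Theorem}). It is also square integrable, since
\[
|\aset{v(\omega),x}_\CH| \le \|v(\omega)\|_\CH \|x\|_\CH
\]
by Cauchy--Schwarz. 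Hence $H_v$ lands in $\HS(\CH, L^2(\Omega))$.

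\emph{Linearity.} For scalars $a,b$ and $u,v \in L^2(\Omega,\CH)$, linearity of the inner product in its first slot gives
\[
(H_{au+bv}x)(\omega) = a\aset{u(\omega),x}_\CH + b\aset{v(\omega),x}_\CH
\]
pointwise. Therefore $H_{au+bv} = aH_u + bH_v$.

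\emph{Norm preservation.} Theorem \ref{v induces a HS operator} computed $\sum_{e\in\mathscr E}\|H_ve\|^2_{L^2(\Omega)}$ for one particular ONB $\mathscr E$. That ONB was adapted to $v$: it extends an ONB of $\overline{\Span{v(\Omega)}}$. The computation showed this sum equals $\|v\|^2_{L^2(\Omega,\CH)}$. By Lemma \ref{HS norm is independent of ONB}, the Hilbert--Schmidt norm does not depend on the choice of ONB, so $\|H_v\|_{\HS(\CH,L^2(\Omega))} = \|v\|_{L^2(\Omega,\CH)}$.

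The only point needing care is exactly this reliance on Lemma \ref{HS norm is independent of ONB}. When computing $\|H_u - H_v\| = \|H_{u-v}\|$, the adapted basis for $u-v$ differs from those for $u$ and $v$, and the lemma is what lets us use it anyway. With that, $\|H_u - H_v\| = \|u-v\|$, so the map is an isometry.

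Finally, a linear map preserving norms also preserves inner products by the polarization identity, using Proposition \ref{properties of the hilbert-schmidt inner product}. It is injective because $\|H_v\| = 0$ forces $\|v\|_{L^2(\Omega,\CH)} = 0$, i.e. $v = 0$ $\mathbb P$-a.e. Surjectivity is not claimed here; it is the subject of the subsequent isomorphism result.
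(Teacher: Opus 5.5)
Your proposal is correct and follows the paper's route: the paper reads the corollary off from Theorem~\ref{v induces a HS operator}, whose computation with an ONB adapted to $v$ gives $\|H_v\|_{\HS(\CH, L^2(\Omega))} = \|v\|_{L^2(\Omega,\CH)}$ (basis-independent by Lemma~\ref{HS norm is independent of ONB}), combined with linearity of $v \mapsto H_v$. You simply make explicit the well-definedness on a.e.-equivalence classes and the linearity, which the paper leaves implicit.
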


\subsection{Properties of the KL-Expansion}
\label{Properties of the KL-Expansion}

\begin{theorem} \label{v has a KLE}
    Suppose $v \in L^2(\Omega, \mathcal H)$, then $v$ has a KLE. 
\end{theorem}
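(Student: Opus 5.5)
The plan is to pass to the centered element, apply the SVD of its induced Hilbert--Schmidt operator, and pull the resulting expansion back to $L^2(\Omega,\CH)$ through the isometry of Corollary \ref{Bochner norm equals HS norm equals sum of svsq}.

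\textbf{Setup.} Since $\mathbb P$ is a probability measure, $L^2(\Omega,\CH)\subseteq L^1(\Omega,\CH)$, so $v$ is Bochner integrable and $\E(v)$ exists. Set $w := v - \E(v)$. The constant function $\E(v)$ lies in $L^2(\Omega,\CH)$, so $w \in L^2(\Omega,\CH)$ as well. By Theorem \ref{v induces a HS operator}, $H_w \in \HS(\CH, L^2(\Omega))$, and by Theorem \ref{HS opearators are compact} it is compact. Its SVD reads
$$H_w = \sum_{r=1}^R \sigma_r Y_r \,\widehat\otimes\, \phi_r ,$$
with $\{Y_r\}$ orthonormal in $L^2(\Omega)$, $\{\phi_r\}$ orthonormal in $\CH$, and $\sigma_r$ positive and decreasing. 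Put $\lambda_r := \sigma_r^2$. The ordering and positivity requirements of Definition \ref{Definition of KLE of v} then hold. The Corollary on singular values, combined with Corollary \ref{Bochner norm equals HS norm equals sum of svsq}, gives
$$\sum_{r=1}^R \lambda_r = \|H_w\|^2_{\HS} = \|w\|^2_{L^2(\Omega,\CH)} .$$

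\textbf{Zero mean.} Since $Y_r = \sigma_r^{-1} H_w\phi_r$, we have $Y_r(\omega) = \sigma_r^{-1}\aset{w(\omega),\phi_r}_\CH$. Bounded linear functionals commute with the Bochner integral, which holds for simple functions and passes to the limit. Hence
$$\E(Y_r) = \sigma_r^{-1}\aset{\E(w),\phi_r}_\CH = \sigma_r^{-1}\aset{\E(v)-\E(v),\phi_r}_\CH = 0 .$$

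\textbf{Convergence (main obstacle).} The key step is to show that $w_N := \sum_{r=1}^N \sigma_r Y_r \phi_r \to w$ in $L^2(\Omega,\CH)$. Each $w_N$ is a finite sum of scalar $L^2$ functions times fixed vectors, so $w_N \in L^2(\Omega,\CH)$. Its induced operator is $H_{w_N} = \sum_{r\le N}\sigma_r Y_r\widehat\otimes\phi_r$, which one checks directly from the definition $(H_{w_N}x)(\omega) = \aset{w_N(\omega),x}$. The map $u\mapsto H_u$ is linear and, by Corollary \ref{Bochner norm equals HS norm equals sum of svsq}, an isometry. Therefore
$$\|w - w_N\|^2_{L^2(\Omega,\CH)} = \|H_w - H_{w_N}\|^2_{\HS} = \sum_{r>N}\sigma_r^2 .$$
This equality uses the Corollary on singular values applied to the tail operator $\sum_{r>N}\sigma_r Y_r\widehat\otimes\phi_r$, which is itself an SVD. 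The right-hand side tends to $0$ because $\sum_r \sigma_r^2 < \infty$. I expect the delicate point to be confirming that $H_w$ really equals its SVD sum in HS norm and that the tail is a legitimate SVD. Both follow from Lemma \ref{HS norm is independent of ONB} by computing the HS norm in an ONB extending $\{\phi_r\}$. When $R$ is finite, the same computation gives $\|w - w_R\| = 0$, so the expansion is exact.
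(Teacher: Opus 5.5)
Your proof is correct. It shares the paper's skeleton: center, take the SVD of $H_{v-\E(v)}$, and transfer back through the isometry of Corollary \ref{Bochner norm equals HS norm equals sum of svsq}. Your zero-mean argument, passing a bounded functional through the Bochner integral, is also what the paper does afterwards via Hille's Theorem. The key identification step, however, is done differently.

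The paper forms the full series $v' = \sum_{r=1}^R \lambda_r^{1/2} Y_r \phi_r$ as a function on $\Omega$ and certifies $v' \in L^2(\Omega,\CH)$ via Pettis' Theorem. It gets ESV because the values lie in the closed span of the $\phi_r$, and weak measurability because each $\aset{v',x}_\CH$ is an $L^2(\Omega)$-convergent series. It then checks $H_{v'} = H_{v_0}$ and concludes $v' = v_0$ by injectivity of $u \mapsto H_u$.

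You work only with the finite partial sums $w_N$, which lie in $L^2(\Omega,\CH)$ trivially. Linearity plus the isometry then turn the Hilbert--Schmidt tail $\sum_{r>N}\sigma_r^2$ into $\|w-w_N\|^2_{L^2(\Omega,\CH)}$. This buys you two things. First, you avoid Pettis' Theorem and never need a pointwise meaning for the infinite series. The paper tacitly uses that $\sum_r \lambda_r Y_r(\omega)^2<\infty$ almost surely, and it identifies the $L^2(\Omega)$-limit of the pairings with the pointwise pairing without comment. Second, you verify property 1 of Definition \ref{Definition of KLE of v} (convergence in $L^2(\Omega,\CH)$) explicitly, whereas the paper leaves it implicit.

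The paper's construction, in turn, starts from an arbitrary SVD rather than from a given $w$, and that is what yields the surjectivity in Corollary \ref{surjectivity}. Your argument would give this too with one more step. For any $H\in\HS(\CH,L^2(\Omega))$, the same isometry computation makes the partial sums of its SVD Cauchy in $L^2(\Omega,\CH)$. Completeness then provides a limit $u$, and continuity of the isometry gives $H_u = H$.
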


\begin{proof}
    Define $v_0 := v - \E(v)$ and $H_{v_0}$ as in Theorem \ref{v induces a HS operator}. Let $H_{v_0}$ have SVD $\sum_{r=1}^R \lambda_r^{1/2} Y_r \widehat \otimes \phi_r$, with $\{ \lambda_r^{1/2} \}_{r=1}^R \subset \R$ the singular values, $\bset{Y_r}_{r=1}^R \subset L^2(\Omega)$ the left singular vectors and $\bset{\phi_r}_{r=1}^R \subset \mathcal H$ the right singular vectors. Now let $v' := \sum_{r=1}^R \lambda_r^{1/2} Y_r \phi_r$. We claim that $v'$ defines an element of $L^2(\Omega, \mathcal H)$. 

    Since $v'$ takes all its values in $\Span{\phi_r}_{r=1}^R$, it's ESV. As $\aset{v',x}_\mathcal H = \sum_{r=1}^R \lambda_r^{1/2} Y_r \aset{\phi_r, x}_\mathcal H$ describes an $L^2(\Omega)$-convergent sum of measurable random variables for any $x \in \mathcal H$, $v'$ is weakly measurable. Thus $v'$ is Bochner measurable by Pettis' Theorem. Finally, the orthonormality of $\bset{Y_r}_{r=1}^R$ and $\bset{\phi_r}_{r=1}^R$ guarantee that the $L^2(\Omega, \mathcal H)$ norm of $v'$ equals $\sum_{r=1}^R \lambda_r = \|H_{v_0}\|^2_{\text{HS}(\mathcal H, L^2(\Omega))} = \|v_0 \|^2_{L^2(\Omega, \mathcal H)} < \infty$.

    Now consider the Hilbert-Schmidt operator $H_{v'}$. For any $x \in \mathcal H$, we have 

    \begin{align*}
        H_{v'}x &= \sum_{r=1}^R \lambda_r^{1/2} Y_r \aset{ \phi_r,x }_\mathcal H
        \\&=
        \sum_{r=1}^R \lambda_r^{1/2} (Y_r \widehat \otimes \phi_r)x 
        \\&= H_vx 
    \end{align*}

    By Cor. \ref{Bochner norm equals HS norm equals sum of svsq}, the correspondence $v \mapsto H_v$ is an isometry; we must have $v_0 = v'$, which is to say $v = \E(v) + \sum_{r=1}^R \lambda_r^{1/2} Y_r \phi_r$.  
\end{proof}

\begin{corollary} \label{surjectivity}
     The correspondence $v \mapsto H_v$ represents an isometric isomorphism between $L^2(\Omega, \mathcal H)$ and $\mathrm{HS}(\mathcal H, L^2(\Omega))$
\end{corollary}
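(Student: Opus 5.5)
Linearity of $v \mapsto H_v$ is immediate, since $\aset{v(\omega),x}_\CH$ is linear in $v$. Corollary \ref{Bochner norm equals HS norm equals sum of svsq} already gives that the map is an isometry, hence injective with closed range. So all that remains is surjectivity. The plan is to take an arbitrary $H \in \HS(\CH, L^2(\Omega))$ and build $v \in L^2(\Omega,\CH)$ with $H_v = H$, reusing the construction from the proof of Theorem \ref{v has a KLE} but running it in the opposite direction.

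First, by Theorem \ref{HS opearators are compact}, $H$ is compact, so it has an SVD
\[
H = \sum_{r=1}^R \sigma_r Y_r \widehat\otimes \phi_r,
\]
with $\bset{Y_r}$ orthonormal in $L^2(\Omega)$, $\bset{\phi_r}$ orthonormal in $\CH$, and $\sum_r \sigma_r^2 = \|H\|^2_{\HS} < \infty$. Define $v := \sum_{r=1}^R \sigma_r Y_r \phi_r$.

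Next, show $v \in L^2(\Omega,\CH)$. The partial sums $v_N = \sum_{r\le N}\sigma_r Y_r\phi_r$ are finite sums of scalar $L^2$ functions times fixed vectors. They are Bochner measurable and lie in $L^2(\Omega,\CH)$. Orthonormality of the $\phi_r$ gives
\[
\|v_N - v_M\|^2_{L^2(\Omega,\CH)} = \sum_{M<r\le N}\sigma_r^2 \|Y_r\|^2_{L^2(\Omega)} = \sum_{M<r\le N}\sigma_r^2 .
\]
So $\bset{v_N}$ is Cauchy, and by completeness of the Bochner space it converges to some $v \in L^2(\Omega,\CH)$. This avoids the pointwise ESV and Pettis argument, though one can equally argue as in Theorem \ref{v has a KLE}.

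Then verify $H_v = H$. The map $u \mapsto H_u x$ is continuous from $L^2(\Omega,\CH)$ to $L^2(\Omega)$, since Cauchy--Schwarz gives $\|H_u x\| \le \|u\|\,\|x\|$. Hence
\[
H_v x = \lim_N H_{v_N}x = \lim_N \sum_{r\le N}\sigma_r Y_r\aset{\phi_r,x}_\CH = Hx .
\]
This proves surjectivity. Together with the isometry, $v \mapsto H_v$ is an isometric isomorphism. By polarization it also preserves inner products.

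The main obstacle is making the infinite-rank case rigorous, namely identifying the $L^2(\Omega,\CH)$-limit of the partial sums with the pointwise series. If one wants $v(\omega)=\sum_r \sigma_r Y_r(\omega)\phi_r$ pointwise, note that $\sum_r \sigma_r^2 Y_r(\omega)^2$ has finite integral. It is therefore finite almost everywhere, so the series converges in $\CH$ almost everywhere, and a subsequence of the $v_N$ converges almost everywhere to the $L^2$ limit. The two definitions of $v$ therefore agree almost everywhere.
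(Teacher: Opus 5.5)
Your proof is correct and follows essentially the paper's route. The paper obtains the corollary by running the SVD construction from the proof of Theorem \ref{v has a KLE} on an arbitrary $H \in \HS(\CH, L^2(\Omega))$, setting $v = \sum_r \sigma_r Y_r \phi_r$ and checking $H_v = H$, which is what you do. The only difference is technical: you get $v \in L^2(\Omega,\CH)$ as the limit of Cauchy partial sums using completeness of the Bochner space, and identify $H_v = H$ by continuity of $u \mapsto H_u x$, instead of invoking Pettis' theorem; this also handles the a.e.\ pointwise convergence of the series, which the paper passes over.
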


\begin{corollary}
    Both $L^2(\Omega, \mathcal H)$ and $\mathrm{HS}(\mathcal H, L^2(\Omega))$ are isometrically isomorphic to $L^2(\Omega) \otimes \mathcal H$. 
\end{corollary}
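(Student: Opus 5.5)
We need to prove that both $L^2(\Omega, \mathcal H)$ and $\mathrm{HS}(\mathcal H, L^2(\Omega))$ are isometrically isomorphic to $L^2(\Omega) \otimes \mathcal H$, the Hilbert tensor product. By Corollary \ref{surjectivity}, it suffices to build an isometric isomorphism $\Phi: L^2(\Omega)\otimes \mathcal H \to \HS(\mathcal H, L^2(\Omega))$; composing with the inverse of $v \mapsto H_v$ then gives the claim for $L^2(\Omega,\CH)$. I take $L^2(\Omega)\otimes\CH$ to be the completion of the algebraic tensor product under the inner product determined by $\aset{Y\otimes h, Z\otimes k} = \aset{Y,Z}_{L^2(\Omega)}\aset{h,k}_\CH$.

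First, on elementary tensors set $\Phi(Y\otimes h) := Y \widehat\otimes h$, the rank-one operator $x\mapsto \aset{x,h}_\CH Y$. The map $(Y,h)\mapsto Y\widehat\otimes h$ is bilinear, so by the universal property it extends linearly to the algebraic tensor product. I then check that it preserves inner products on elementary tensors. Pick an ONB $\mathscr E$ of $\CH$ and use Proposition \ref{properties of the hilbert-schmidt inner product}:
\[
\aset{Y\widehat\otimes h, Z\widehat\otimes k}_{\HS} = \sum_{e\in\mathscr E}\aset{e,h}\aset{e,k}\aset{Y,Z} = \aset{h,k}\aset{Y,Z}.
\]
By sesquilinearity, $\Phi$ is therefore isometric on the algebraic tensor product. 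This also shows $\Phi$ is well defined, since an element of norm zero is sent to an element of norm zero. Because $\HS(\CH,L^2(\Omega))$ is complete, $\Phi$ extends uniquely to an isometry on the completion $L^2(\Omega)\otimes\CH$.

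The main step is surjectivity. Since $\Phi$ is an isometry from a complete space, its range is closed, so it suffices to show the range is dense. Take any $H\in\HS(\CH,L^2(\Omega))$ with SVD $H=\sum_{r=1}^R \sigma_r g_r\widehat\otimes h_r$. The partial sums $H_N=\sum_{r\le N}\sigma_r g_r\widehat\otimes h_r$ lie in the range of $\Phi$ on the algebraic tensor product. By the corollary to Lemma \ref{HS norm is independent of ONB}, applied to the SVD of $H-H_N$ (whose singular values are $\sigma_r$ for $r>N$),
\[
\|H-H_N\|_\HS^2=\sum_{r>N}\sigma_r^2 \to 0 .
\]
So every $H$ is a limit of elements of the range, and hence lies in the range. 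No separability assumption is needed here, since the SVD is countable regardless.

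The delicate point is that the limit $\sum_r \sigma_r g_r\otimes h_r$ must converge in the tensor product. This holds because its terms are orthogonal with square norms $\sigma_r^2$, which are summable, and its image under $\Phi$ is $H$ by continuity. Composing $\Phi^{-1}$ with the isomorphism of Corollary \ref{surjectivity} then gives $L^2(\Omega,\CH)\cong L^2(\Omega)\otimes\CH$. Explicitly, $Y\otimes h$ corresponds to the function $\omega\mapsto Y(\omega)h$.
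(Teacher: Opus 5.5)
Your proposal is correct and follows essentially the same route as the paper: the inner-product identity on elementary tensors, combined with density of the rank-one combinations $X \widehat\otimes e$ (via SVD truncation) in $\HS(\CH, L^2(\Omega))$, which the paper packages as a citation to Berberian's characterization of the Hilbert tensor product. The differences are that you carry out the isometric extension and the closed-range/dense-range argument by hand, and you obtain the $L^2(\Omega,\CH)$ case by composing with Corollary \ref{surjectivity} instead of giving a separate totality argument for the functions $Xe$. This last choice is slightly cleaner than the paper's.
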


\begin{proof}
    Thm. \ref{v induces a HS operator} implies that the set of elements of $L^2(\Omega, \mathcal H)$ of the form $Xe$, $X \in L^2(\Omega)$, $e \in \mathcal H$ is total in $L^2(\Omega, \mathcal H)$. As in Sec. \ref{Background on Hilbert-Schmidt Operators}, the set of elements of $\text{HS}(\mathcal H, L^2(\Omega))$ of the form $X \widehat \otimes e$ is total in $\text{HS}(\mathcal H, L^2(\Omega))$. Furthermore, the mapping from $L^2(\Omega) \times \mathcal H$ to $L^2(\Omega, \mathcal H)$ given by $(X,e) \mapsto Xe$ is bilinear, as is the mapping from $L^2(\Omega) \times \mathcal H$ to $\text{HS}(\mathcal H, L^2(\Omega))$ given by $(X,e) \mapsto X \widehat \otimes e$. 
    
    Now let $X_1, X_2 \in L^2(\Omega)$, $e_1, e_2 \in \CH$. We have 

    \begin{align*}
        \aset{X_1e_1, X_2e_2}_{L^2(\Omega, \CH)} &= 
        \intp{\aset{X_1(\omega)e_1, X_2(\omega) e_2}_\CH }
        \\&= 
        \aset{e_1, e_2}_\CH \aset{X_1, X_2}_{L^2(\Omega)}
    \end{align*}

    Furthermore, for any arbitrary ONB $\mathscr E$ of $\mathcal H$ we have  

    \begin{align*}
        & \aset{X_1 \widehat \otimes e_1, X_2 \widehat \otimes e_2}_{\text{HS}(\CH, L^2(\Omega))} 
       \\  &= 
       \sum_{e \in \mathscr E} 
         \aset{(X_1 \widehat \otimes e_1)e, (X_2 \widehat \otimes e_2)e}_{L^2(\Omega)} 
         \\&= 
          \sum_{e \in \mathscr E} \aset{e_2, e}_\CH \aset{e_1, e}_\CH  
         \aset{X_1, X_2}_{L^2(\Omega)} 
         \\&= 
        \aset{e_1, e_2}_\CH \aset{X_1, X_2}_{L^2(\Omega)}
    \end{align*}

    Thus by \cite{Berberian2014}, we have $L^2(\Omega, \mathcal H) \cong \text{HS}(\mathcal H, L^2(\Omega)) \cong L^2(\Omega) \otimes \mathcal H$
\end{proof}

    A word of caution is in order. The tensor product of two Hibert spaces $\mathcal H \otimes \mathcal G$ is often defined to be the completion of the \textit{algebraic tensor product} -- i.e. the vector space \textit{finite} linear combinations of the monomials $h \otimes g$, $h \in \mathcal H$, $g \in \mathcal G$ -- with respect to the inner product $\aset{h_1 \otimes g_1, h_2 \otimes g_2} := \aset{h_1,h_2}_\mathcal H \aset{g_1, g_2}_\mathcal G$ \cite{alexanderian2013}. While $\mathcal H \otimes \mathcal G$ is a valid Hilbert space, $\otimes$ doesn't represent a categorical tensor product, in that continuous bilinear maps $T: \mathcal H \times \mathcal G \to \mathcal K$ do not always factor uniquely through $\mathcal H \otimes \mathcal G$ (an example is provided in Appendix \ref{The Failure of the Categorical Tensor Product}). 

As the KLE is derived from an SVD of a compact operator, we obtain properties 1-4 as enumerated in Definition \ref{Definition of KLE of v} immediately. We see that $R \in \N \cup \bset{\aleph_0}$ equals the rank of $H_{v_0}$. The only thing to check is that each $Y_r$ has mean zero. We invoke Hille's Theorem to do so (available as Thm. 6, Ch.2 of \cite{DiestelUhl1977})

\begin{theorem} [Hille]
    Let $v: \Omega \to \mathcal H$ be Bochner integrable. Let $T: \mathcal H \to \mathcal G$ be a closed linear operator between Banach spaces such that $Tv: \Omega \to \mathcal G$ is also Bochner integrable, then $T \int_E v \; d \mathbb P = \int_E \;  Tv \; d \mathbb P$. 
\end{theorem}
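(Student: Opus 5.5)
We prove Hille's Theorem by reducing to simple functions, where the claim is just linearity of $T$, and then passing to the limit using the closedness of $T$. Fix $E \in \mathcal F$. By Bochner integrability of $v$ there is a sequence of simple functions $v_k$ with $v_k \to v$ $\mathbb P$-a.e. and $\int_E \|v_k - v\|_\CH \, d\mathbb P \to 0$, so $\int_E v_k \, d\mathbb P \to \int_E v \, d\mathbb P$ in $\CH$. The first obstacle is that $Tv_k$ need not be defined or converge, since $T$ is only closed and the $e^n_k$ may not lie in $\mathrm{dom}(T)$. To fix this we approximate $v$ in the graph space of $T$ instead of in $\CH$.

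\textbf{Step 1.} Equip $\mathrm{dom}(T)$ with the graph norm $\|x\|_T := \|x\|_\CH + \|Tx\|_\mathcal G$. Since $T$ is closed, $\mathcal D_T := (\mathrm{dom}(T), \|\cdot\|_T)$ is a Banach space, isometric to the graph $\Gamma(T) \subseteq \CH \times \mathcal G$.

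\textbf{Step 2.} Consider $w(\omega) := (v(\omega), Tv(\omega)) \in \CH \times \mathcal G$. The hypothesis that $Tv$ is Bochner integrable implicitly requires $v(\omega) \in \mathrm{dom}(T)$ $\mathbb P$-a.e., so $w$ takes values in $\Gamma(T)$ a.e. Both components of $w$ are Bochner measurable and integrable, so $w$ is Bochner integrable in $\CH \times \mathcal G$. We then need Bochner measurability of $w$ as a $\Gamma(T)$-valued map. For this we apply Pettis' Theorem (Theorem \ref{Pettis Theorem}) to $w$:
\begin{itemize}
\item $w$ is ESV, since $v$ and $Tv$ are each ESV and a product of separable sets is separable.
\item $w$ is weakly measurable, since every functional on $\CH \times \mathcal G$ is a sum of functionals on the factors.
\end{itemize}
By Hahn--Banach, functionals on the closed subspace $\Gamma(T)$ extend to $\CH \times \mathcal G$, so weak measurability passes to $\Gamma(T)$ as well. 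Hence there are simple functions $w_k$ with values in $\Gamma(T)$ and $\int_E \|w_k - w\| \, d\mathbb P \to 0$. This is the standard fact that integrable Bochner functions admit simple approximants dominated appropriately; the paper's definition of integrability supplies it directly once measurability in $\Gamma(T)$ is known.

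\textbf{Step 3.} Each $w_k$ has the form $\sum_n \mathbb I_{E_n}(x_n, Tx_n)$ with $x_n \in \mathrm{dom}(T)$, and
\[
\int_E w_k \, d\mathbb P = \Bigl(\sum_n \mathbb P(E_n \cap E)\, x_n,\ T\sum_n \mathbb P(E_n \cap E)\, x_n\Bigr) \in \Gamma(T)
\]
by linearity of $T$. These integrals converge in $\CH \times \mathcal G$ to $\int_E w \, d\mathbb P = \bigl(\int_E v \, d\mathbb P, \int_E Tv \, d\mathbb P\bigr)$. The coordinatewise identity holds because the coordinate projections are bounded, so the Bochner integral commutes with them; this is immediate on simple functions and passes to the limit.

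\textbf{Step 4.} Since $\Gamma(T)$ is closed, the limit lies in $\Gamma(T)$. That is, $\int_E v \, d\mathbb P \in \mathrm{dom}(T)$ and $T\int_E v \, d\mathbb P = \int_E Tv \, d\mathbb P$.

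The main obstacle is Step 2, namely producing simple approximants that live in the graph rather than merely in $\CH$. The device of viewing $w$ as a single integrable function valued in the closed subspace $\Gamma(T)$ is what makes this work. For the application in the paper, $T = \aset{\cdot, \phi_r}_\CH$ is bounded. In that case Steps 1 and 2 are trivial: one applies $T$ to the simple approximants of $v$ directly and uses $\|Tv_k - Tv\| \le \|T\|\|v_k - v\|$.
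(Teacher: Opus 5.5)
Your proof is correct and is the standard graph argument: the paper gives no proof of Hille's theorem and defers to Diestel--Uhl (Ch.~II, Thm.~6), where one likewise passes to $w=(v,Tv)$ with values in the closed graph $\Gamma(T)\subseteq\mathcal H\times\mathcal G$ and uses that the bounded coordinate projections commute with the Bochner integral. One small correction: getting $\Gamma(T)$-valued simple $w_k$ with $\int_E\|w_k-w\|\,d\mathbb P\to 0$ from Pettis' theorem needs Bochner's criterion (e.g.\ replace $w_k$ by $w_k\,\mathbb I_{\{\|w_k\|\le 2\|w\|\}}$ and apply dominated convergence), not the paper's definition alone; you can also skip this detour entirely, because $w$ is already Bochner integrable in $\mathcal H\times\mathcal G$ and every functional annihilating $\Gamma(T)$ annihilates $\int_E w\,d\mathbb P$.
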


By Hille's Theorem, setting $T = \aset{\lambda_r^{-1/2} \phi_r, \cdot}$

\begin{align*}
\intp{Y_r} &= 
\intp{\aset{\lambda_r^{-1/2}\phi_r, v - \E(v)}_\mathcal H}
\\&= \lambda_r^{-1/2} \aset{\phi_r,  
\underbrace{
\intp{
v - \E(v)
}
}_{= 0}
}_\mathcal H 
\\&= 0 
\end{align*}

It is worth noting that the membership of $v$ in $L^2(\Omega, \CH)$ is not just sufficient for $v$ to admit a KLE, it is necessary. Indeed, any $v_0$ of the form $\sum_{r=1}^R \lambda_r^{1/2} Y_r \phi_r$ with properties 1-4 of Definition \ref{Definition of KLE of v} is ESV, weakly measurable, and square summable. Hence the function $v = v_0 + \E(v)$ belongs to $L^2(\Omega, \CH)$. 

Perhaps one of the most important properties of the KLE is the optimal $M$-term truncation property as follows: 

\begin{theorem}[Optimal $M$-term Truncation Property] \label{Optimal M term truncation Property}
	Let $v \in L^2(\Omega, \CH)$ have \eqref{KLE of v} as its KLE, and let $v_0 := v - \E(v)$. Then for any subspace $\mathcal S \subseteq \mathcal H$ of dimension $M \leq R$, $$\|(\id_\CH - \mathbf P_\mathcal S)v_0 \|_{L^2(\Omega, \CH)}^2 \geq \sum_{r > M} \lambda_r $$ with equality iff $\mathcal S = \Span{\phi_r}_{r=1}^M$. 
\end{theorem}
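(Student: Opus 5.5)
The plan is to reduce the inequality to a statement about the singular values of the Hilbert--Schmidt operator $H_{v_0}$ from Theorem \ref{v induces a HS operator}, and then apply a Ky Fan type argument. Fix an orthonormal basis $\bset{s_j}_{j=1}^M$ of $\mathcal S$, so that $\mathbf P_\mathcal S x = \sum_{j=1}^M \aset{x, s_j}_\CH s_j$.

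\textbf{Step 1 (Pythagoras).} Since $\mathbf P_\mathcal S$ is an orthogonal projection, pointwise in $\omega$ we have $\|(\id_\CH - \mathbf P_\mathcal S)v_0(\omega)\|^2_\CH = \|v_0(\omega)\|^2_\CH - \sum_{j=1}^M \aset{v_0(\omega), s_j}^2_\CH$. Integrating and using property 2 of Definition \ref{Definition of KLE of v}, the left side of the claimed inequality equals
$$\sum_{r=1}^R \lambda_r - \sum_{j=1}^M \|H_{v_0} s_j\|^2_{L^2(\Omega)}.$$
It therefore suffices to show that $\sum_{j=1}^M \|H_{v_0}s_j\|^2_{L^2(\Omega)} \le \sum_{r=1}^M \lambda_r$.

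\textbf{Step 2 (expanding in the KLE).} By the proof of Theorem \ref{v has a KLE}, $H_{v_0} = \sum_r \lambda_r^{1/2} Y_r \widehat\otimes \phi_r$. Since the $Y_r$ are orthonormal,
$$\|H_{v_0}s_j\|^2 = \sum_r \lambda_r \aset{s_j,\phi_r}_\CH^2.$$
Summing over $j$ gives $\sum_r \lambda_r c_r$ with $c_r := \sum_{j=1}^M \aset{s_j,\phi_r}^2_\CH = \|\mathbf P_\mathcal S \phi_r\|^2$. These weights satisfy $0 \le c_r \le 1$ by Bessel's inequality, and $\sum_r c_r \le \sum_j \|s_j\|^2 = M$ by Bessel's inequality in the other variable, with the interchange of sums justified by nonnegativity.

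\textbf{Step 3 (linear optimization; the main obstacle).} We must maximize $\sum_r \lambda_r c_r$ subject to $0\le c_r\le 1$ and $\sum_r c_r\le M$, with $\lambda_r$ decreasing. The standard rearrangement bound is
$$\sum_r \lambda_r c_r - \sum_{r\le M}\lambda_r = \sum_{r\le M}\lambda_r(c_r-1) + \sum_{r>M}\lambda_r c_r \le \lambda_M\Big(\sum_{r\le M}(c_r-1)+\sum_{r>M}c_r\Big)\le 0.$$
This proves the inequality.

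\textbf{Step 4 (the equality case).} For the ``if'' direction, taking $\mathcal S=\Span{\phi_r}_{r=1}^M$ gives $c_r=1$ for $r\le M$ and $c_r=0$ otherwise, so equality holds. For the converse, trace the equalities in Step 3. Equality forces $c_r=1$ for every $r\le M$ with $\lambda_r>\lambda_M$, and $c_r=0$ for every $r>M$ with $\lambda_r<\lambda_M$. When $\lambda_M>\lambda_{M+1}$ (or when $M=R$), this yields $c_r = 1$ for all $r\le M$, i.e.\ $\phi_1,\dots,\phi_M\in\mathcal S$. Since $\dim\mathcal S=M$, this gives $\mathcal S=\Span{\phi_r}_{r=1}^M$.

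The delicate point, which I expect to be the real obstacle, is exactly this uniqueness claim. If $\lambda_M=\lambda_{M+1}$, then replacing $\phi_M$ by $\phi_{M+1}$ gives another optimal subspace. So the ``iff'' can only hold with the understanding that the eigenvectors $\phi_r$ are chosen freely within degenerate eigenspaces, or under a spectral gap assumption $\lambda_M>\lambda_{M+1}$. I would state and prove the equality case under that gap hypothesis, and remark that in general equality holds iff $\mathcal S$ is spanned by some choice of top-$M$ singular vectors of $H_{v_0}$.
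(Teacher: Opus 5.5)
Your argument is correct, and it takes a different route from the paper.

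\textbf{How the paper argues.} It first tries to reduce to $\mathcal S \subseteq \Phi := \Span{\phi_r}_{r=1}^R$ by writing $\mathbf P_{\mathcal S} = \mathbf P_{\mathcal S \cap \Phi} + \mathbf P_{\mathcal S \cap \Phi^\perp}$. It then uses the natural isomorphism $L^2(\Omega,\CH) \cong L^2(\Omega) \otimes \CH$ to move the problem into $\mathcal Y \otimes \Phi$. Finally it cites Thm.~2.7 of Schwab--Todor for the minimization, so the actual optimization is outsourced.

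\textbf{How you argue.} You prove the Ky Fan bound directly. Pythagoras reduces the error to $\sum_r \lambda_r c_r$ with $c_r = \|\mathbf P_{\mathcal S}\phi_r\|_\CH^2$, and an elementary rearrangement finishes.

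\textbf{What each route buys.} Yours is self-contained. It also handles subspaces tilted relative to $\Phi$, which the paper's reduction does not. That splitting of $\mathbf P_{\mathcal S}$ requires $\mathcal S = (\mathcal S\cap\Phi)\oplus(\mathcal S\cap\Phi^\perp)$, and this fails in general. For example, take $\mathcal S = \Span{\phi_1+\psi}$ with $\psi \neq 0$ orthogonal to every $\phi_r$. Both intersections are $\{0\}$, yet $\mathbf P_{\mathcal S} v_0 = \tfrac12 \lambda_1^{1/2} Y_1 (\phi_1+\psi) \neq 0$. The paper's route mainly buys conceptual framing: it showcases the naturality machinery and ties the result to the tensor-product literature.

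\textbf{The equality case.} You are right that the ``only if'' half of the statement is false without the gap $\lambda_M > \lambda_{M+1}$; your swap of $\phi_M$ and $\phi_{M+1}$ is a valid counterexample. The paper's citation does not address this. Stating uniqueness under the gap hypothesis, or up to the choice of eigenvectors within the degenerate eigenspace, is the correct formulation.

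\textbf{Two small points to tighten.}
\begin{enumerate}
\item In Step 4, the two conditions you list do not by themselves force $c_r = 1$ for those $r \le M$ with $\lambda_r = \lambda_M$ (e.g.\ when $\lambda_1 = \dots = \lambda_M$). You also need tightness of the last inequality in Step 3. Since $\lambda_M > 0$, that gives $\sum_r c_r = M$. Together with $c_r = 0$ for $r > M$ and $c_r \le 1$, this yields $c_r = 1$ for all $r \le M$.
\item In Step 2, the theorem assumes an arbitrary expansion satisfying properties 1--4, not necessarily the one built from the SVD in the existence proof. The identity $H_{v_0} = \sum_r \lambda_r^{1/2} Y_r \widehat\otimes \phi_r$ still holds, because $v \mapsto H_v$ is a linear isometry and $H_{Y\phi} = Y \widehat\otimes \phi$. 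More simply, $w \mapsto \aset{w, s_j}_\CH$ is continuous from $L^2(\Omega,\CH)$ to $L^2(\Omega)$ and can be applied termwise to the $L^2(\Omega,\CH)$-convergent expansion.
\end{enumerate}
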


It is this property which allows for low dimensional computation with minimal error propagation. We postpone the proof for Section \ref{Naturality and Finite Dimensional Truncation}.

We conclude this section by mentioning that the non-separability assumption on $\mathcal H$ is only a mild generalization. The necessity that $v$ be ESV to possess a KLE allows us to redefine $v$ on a null set so that its codomain is separable, hence we can appeal to the previously developed theory behind KLEs. Additionally, non-separable Hilbert spaces are rare in applications and impossible in computations (only finite dimensional Hilbert spaces are admissible!). We include this content on non-separable spaces anyways for the interested reader and for potential undiscovered applications.

\subsection{Borel Measurability}

Since $\mathcal H$ constitutes a topogical space, a question of interest arises: What if we merely assume $v: \Omega \to \mathcal H$ is Borel measurable, i.e. $v^{-1}(E) \in \mathcal F$ for each Borel (with respect to the norm topology) subset $E \subseteq \mathcal H$? The uninterested reader may pass over this section. 

Assuming $v$ is Borel measurable, since $\aset{\cdot, e}$ is a continuous linear functional on $\mathcal H$ for any $e \in \mathcal H$, then clearly $v$ is weakly measurable. If $\mathcal H$ is separable, then $v(\Omega)$ is, of course, separably valued, so $v$ is Bochner measurable and thus lies in $L^2(\Omega, \mathcal H)$. However, if $\mathcal H$ is not separable, then the set of Borel measurable functions $v: \Omega \to \mathcal H$ may not even be a vector space! In particular, the sum of two Borel measurable functions may not be Borel measurable. 

\begin{example} Two Borel-measurable functions whose sum is not Borel. \end{example}
    Let $I$ be a set of cardinality greater than $\mathfrak c$, endowed with the discrete topology. Consider the (uncountable) standard orthonormal basis $\bset{e_i}_{i \in I}$ of $\ell^2(I)$\footnote{For any set $I$, $\ell^2(I)$ (resp. $\ell^2(I, \R^Q)$) denotes the Hilbert space of functions $v:I \to \R$ (resp. $v: I \to \R^Q$) such that $\sum_{i \in I} v(i)^2 < \infty$ (resp. $\sum_{i \in I} v(i)^\top v(i) < \infty$). Of course, $v$ must be non-zero for at most countably many $i$ in order that the sum make sense. This space possesses the inner product $\aset{u,v}_{\ell^2(I)} := \sum_{i \in I} u(i)v(i)$ (resp. $\aset{u,v}_{\ell^2(I, \R^Q)} := \sum_{i \in I} u(i)^\top v(i)$) }.

    Let $\Omega = I \times I$, and let $\mathcal F = \mathscr B(I) \otimes \mathscr B(I)$ where $\mathscr B(I)$ is the Borel $\sigma$-algebra on $I$. We define $v_1: I \times I \to \ell^2(I)$ by $v_1(\omega_1, \omega_2) = e_{\omega_1}$ and similarly $v_2(\omega_1, \omega_2) = -e_{\omega_2}$. We compute the pre-image of 0 $$(v_1 + v_2)^{-1}(\bset{0}) = \bset{(i, i) \in \Omega: i \in I} $$

    That the diagonal $ \bset{(i, i) \in \Omega: i \in I}$, is not in $\mathscr B(I) \otimes \mathscr B(I)$ when $\# I > \mathfrak c$ is the statement of Nedoma's pathology, a proof of which is available in \cite{schechter1996}. Thus, $v_1 + v_2$ is not Borel-measurable, even though $v_1$ and $v_2$ individually are.

Theorem 1.2 of Koumillis \cite{koumoullis1981on}  states that any Lebesgue measurable function $v:[0,1] \to \mathcal H$ is ESV (here $\mathcal H$ need merely be a metric space, not necessarily a Hilbert space). 
Lemma \ref{measurable functions on separable metric spaces are ESV} below generalizes Koumillis' statement for any $\Omega$ which is a separable metric space. 

\begin{comment}
As a corollary, any Lebesgue measurable function $v:\R \to \mathcal H$ is also ESV. If we define $\sigma: (0,1) \to \R$ by $\sigma(x) = \dfrac{e^x}{1 + e^x}$, then $v = (v \circ \sigma^{-1}) \circ \sigma$. The function $u := v \circ \sigma^{-1}$ is a Lebesgue measurable function $(0,1) \to \mathcal H$, and thus can be viewed as the restriction of a Lebesgue measurable function from $[0,1] \to \mathcal H$. Thus there exists a null set $\Omega_0 \subseteq (0,1)$ such that $u(\Omega_0^C)$ is separable. Since $v(\Omega_0^C) = u( \sigma^{-1}(\Omega_0^C))$, then $v$ will be ESV if $\sigma^{-1}(\Omega_0^C)$ has Lebesgue measure zero. This holds because 

$$\lambda(\sigma^{-1}(\Omega_0^C)) = \int_{\sigma^{-1}(\Omega_0^C)} dx = \int_{\Omega_0} (\sigma^{-1})'(x) dx = 0$$

where $\lambda$ denotes the Lebesgue measure and $(\sigma^{-1})'(x) = \sigma(x)(1 - \sigma(x))$ denotes the ordinary derivative of $(\sigma^{-1})$. More generally, if we consider ANY $(\Omega, \mathcal F, \mathbb P)$ where $\Omega$ is a separable metric space and $\mathcal F$ contains the Borel $\sigma$-algebra on $\Omega$, then any Borel-measurable function $v: \Omega \to \mathcal H$ is ESV. 
\end{comment}

 \begin{lemma} 
 \label{measurable functions on separable metric spaces are ESV} If $(\Omega, \mathcal F, \mathbb P)$ is such that $\Omega$ is a separable metric space and $\mathcal F$ contains the Borel sets of $\Omega$, then if $v: \Omega \to \mathcal H$ is measurable, $v$ is ESV. 
 \end{lemma}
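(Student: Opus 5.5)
The plan is to reduce the statement to Koumoullis' theorem (Theorem 1.2 of \cite{koumoullis1981on}) by transporting the problem from $\Omega$ to $[0,1]$. In outline: a Borel-measurable injection $\Omega \to [0,1]$ turns $v$ into a measurable map on $[0,1]$, and a quantile change of variables replaces the image measure by Lebesgue measure.

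\textbf{Step 1: an injection into $[0,1]$.} Since $\Omega$ is a separable metric space, it has a countable base $\{U_k\}_{k\ge1}$, and this base generates the Borel $\sigma$-algebra $\mathscr B(\Omega)$. Define
\[
f(\omega) := \sum_{k\ge1} 2\cdot 3^{-k}\,\mathbb I_{U_k}(\omega).
\]
Then $f$ is a Borel map into the Cantor set $C \subseteq [0,1]$. It is injective, because the base separates points. It also satisfies $f^{-1}(\mathscr B([0,1])) = \mathscr B(\Omega)$, since each $U_k$ is the preimage of a clopen cylinder in $C$. Consequently every Borel set $A \subseteq \Omega$ has the form $f^{-1}(B)$ for some Borel $B \subseteq [0,1]$.

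Define $u$ on $f(\Omega)$ by $u := v\circ f^{-1}$, and extend it by a constant off $f(\Omega)$. For an open $G \subseteq \mathcal H$, the set $v^{-1}(G)$ lies in $\mathcal F$. Here I will reduce to the case $\mathcal F = \mathscr B(\Omega)$ completed. Replacing $v$ on a null set only helps, because an $\mathcal F$-measurable set differs from a Borel set by a null set.

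\textbf{Step 2: pushing forward and normalizing.} Let $\mu := \mathbb P\circ f^{-1}$, a Borel probability measure on $[0,1]$. Then $u$ is $\mu$-measurable and $u\circ f = v$. Let $F$ be the distribution function of $\mu$ and $Q(t) := \inf\{x : F(x)\ge t\}$ its quantile function. Then $Q$ is Borel, and $\mathrm{Leb}\circ Q^{-1} = \mu$. Hence $w := u\circ Q : [0,1]\to\mathcal H$ is Lebesgue measurable.

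Koumoullis' theorem now yields a Lebesgue-null set $N$ such that $w([0,1]\setminus N)$ is separable. I then transfer this back along the chain. Since $\mu$ is the image of Lebesgue measure under $Q$, the set $u(\cdot)$ restricted to a $\mu$-conull Borel set is contained in $w([0,1]\setminus N)$, up to a $\mu$-null set. I would enlarge $N$ to a Borel null set and use $Q([0,1]\setminus N)$, which is $\mu$-conull via $\mathrm{Leb}(Q^{-1}(\cdot))$. Pulling back along $f$, the set $\Omega_0 := f^{-1}(\text{that } \mu\text{-null set})$ is $\mathbb P$-null, and $v(\Omega_0^C)$ is a subset of a separable set. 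A subset of a separable metric space is separable, so $v$ is ESV.

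\textbf{Expected obstacle.} The main difficulty is Step 2, specifically the measurability bookkeeping. The issues are three:
\begin{enumerate}
\item images of Borel sets under $Q$ need not be Borel, so I should argue only with preimages;
\item $u$ is defined only on the possibly non-Borel set $f(\Omega)$;
\item $\mathcal F$ may be strictly larger than the completion of $\mathscr B(\Omega)$.
\end{enumerate}
For the first two, I would work throughout with $\mu$-completions and the identity $\{t : Q(t)\in A\}$, and use the fact that $f(\Omega)$ has full outer $\mu$-measure.

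If this bookkeeping becomes unmanageable, the fallback is to prove the lemma directly. That route takes, for each $\varepsilon>0$, a maximal $\varepsilon$-separated set $D$ in $v(\Omega)$ and the associated discrete family of balls. Any union of these balls is open, so every subfamily has a measurable preimage. Only countably many balls can carry positive mass, and the remaining ones must be shown to have null union. That last point is exactly where Koumoullis' argument, which uses the separability of the domain, has to be invoked, so the reduction above is the preferred route.
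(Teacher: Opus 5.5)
Your reduction to Koumoullis' theorem is a different route from the paper's, which argues directly that an uncountable discrete family of open sets $O_i$ would produce $2^{\#I}$ distinct sets $v^{-1}(O_J)$ in a separable space having only $2^{\aleph_0}$ Borel sets. As written, though, your reduction has two genuine gaps.

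The first is the reduction in Step~1. The claim that every $\mathcal F$-measurable set differs from a Borel set by a null set is false in general. Take a Bernstein set $V\subseteq[0,1]$. The formula $\mathbb P\bigl((B_1\cap V)\cup(B_2\setminus V)\bigr)=\tfrac12\lambda(B_1)+\tfrac12\lambda(B_2)$ is well defined, because $V$ and $[0,1]\setminus V$ both have outer measure $1$. It extends Lebesgue measure to $\sigma(\mathscr B([0,1])\cup\bset{V})$, and $\mathbb P(V\triangle B)=\tfrac12$ for every Borel $B$.

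So your item~3 is not bookkeeping, and in this generality it cannot be resolved at all. Suppose there is a real-valued measurable cardinal $\kappa\le 2^{\aleph_0}$, which is consistent relative to a measurable cardinal. Take $\Omega\subseteq[0,1]$ with $\#\Omega=\kappa$, $\mathcal F=\mathcal P(\Omega)$, $\mathbb P$ a probability on $\mathcal P(\Omega)$ vanishing on points, and $v(\omega)=e_\omega\in\ell^2(\Omega)$. Then $v$ is measurable. But $\|e_\omega-e_{\omega'}\|_{\ell^2(\Omega)}=\sqrt2$ for $\omega\neq\omega'$, so any $\Omega_0^C$ with $v(\Omega_0^C)$ separable is countable, hence null, and $v$ is not ESV. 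You must therefore add a hypothesis, e.g.\ that $\mathcal F$ is the $\mathbb P$-completion of $\mathscr B(\Omega)$. The paper's proof tacitly needs the same thing when it calls each $v^{-1}(O_J)$ Borel.

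The second gap is the claim in Step~2 that $u$ is $\mu$-measurable. This fails even when $\mathcal F$ is the completed Borel $\sigma$-algebra. For an open $G$ not containing the constant, $u^{-1}(G)$ agrees with $B\cap f(\Omega)$ up to a subset of a $\mu$-null Borel set, where $B$ is Borel and $v^{-1}(G)=f^{-1}(B)$ modulo a null set. Such traces need not be $\mu$-measurable when $f(\Omega)$ is not.

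For a concrete case, take $\Omega=V$ a Bernstein set with $\mathbb P(B\cap V):=\lambda(B)$. Let $B_0\subseteq f(V)$ be Borel. The inverse of $f$ on $B_0$ is Borel because $f^{-1}(\mathscr B([0,1]))=\mathscr B(V)$, so by Lusin--Souslin $f^{-1}(B_0)$ is a Borel subset of $[0,1]$ contained in $V$. It is therefore countable and $\mathbb P$-null. Hence $f(V)$ has $\mu$-inner measure $0$ and outer measure $1$, and $B\cap f(V)$ is non-measurable whenever $\mu(B)>0$. Already $v(\omega)=\omega h$, with $h\in\CH$ a unit vector, gives a non-measurable $u$. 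So you cannot conclude that $w=u\circ Q$ is Lebesgue measurable, which is exactly the hypothesis of Koumoullis' theorem. Full outer measure of $f(\Omega)$ does not help: you would need a coherent choice of Borel sets $E_G$ with $u^{-1}(G)=E_G\cap f(\Omega)$ for uncountably many open $G$, and your sketch supplies none.

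If you also assume $\mathbb P$ is Radon (tight), the argument can be repaired. Then $\Omega$ contains a $\sigma$-compact $K$ of full measure, and $f(K)$ and every $f(A\cap K)$ with $A$ Borel are Borel by Lusin--Souslin. So $u$ is $\mu$-measurable. The rest of your argument then goes through: the quantile transform, followed by pulling back through $Q([0,1]\setminus N)$, which is analytic and hence universally measurable.

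With those two hypotheses your proof is correct. Compared with the paper, it delegates the combinatorial core to Koumoullis but exploits the measure instead of counting Borel sets. That is an advantage: the paper's comparison of $2^{\#I}$ with $2^{\aleph_0}$ requires $2^{\aleph_1}>2^{\aleph_0}$, which ZFC does not prove.
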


\begin{proof}
    Mimicking the proof of Lemma 1.1.12 in Hytonen \cite{hytonen2016}, we AFSOC that  $v(\Omega_1)$ is non-separable for all $\Omega_1 \in \mathcal F$ of measure one. Then there exists an uncountable, disjoint collection of non-empty open sets $\bset{O_i}_{i\in I} $ each intersecting $v(\Omega_1)$. For each subset $J$ of $I$, define $O_J := \bigcup_{i \in J} O_i$. For any $J \subseteq I$, $v^{-1}(O_J)$ is a Borel subset of $\Omega$. For all $J \neq J' \subseteq I$, $v^{-1}(O_J) \neq v^{-1}(O_{J'})$. So there must be at least $2^{\#I}$ Borel subsets of $\Omega_1$, but since $\Omega_1$ is separable, there can only be $2^{\aleph_0}$ Borel subsets. 
\end{proof}

Note that as in \cite{koumoullis1981on}, $\CH$ in Lemma \ref{measurable functions on separable metric spaces are ESV} need only be a metric space instead of a Hilbert space. 


\begin{comment}
 \begin{corollary}
 	For any $E \in \mathcal F$, let $E \cap \mathcal F := \bset{E \cap F: F \in \mathcal F}$. 
    If $(\Omega, \mathcal F, \mathbb P)$ is any complete probability space with a set $\Omega_1 \in \mathcal F$ such that $\mathbb P(\Omega_1) = 1$, $\# \Omega_1 \cap \mathcal F = 2^{\aleph_0}$, and $v: \Omega \to \mathcal H$ is Borel, then $v$ is ESV. 
\end{corollary}
\end{comment}

\begin{definition} [Tight Measure] Let $\Omega$ be a metric space and $\mathbb P$ a Borel measure on $\Omega$. $\mathbb P$ is tight if for every $\varepsilon > 0$, there exists a compact subset $K_\varepsilon \in \mathcal F$ such that $\mathbb P (\Omega \setminus K_\varepsilon) < \varepsilon$.
\end{definition}

\begin{proposition}
    Let $\mathbb P$ be a tight Borel measure on the metric space $\Omega$, and $v: \Omega \to \mathcal H$ be Borel measurable. Then $v$ is ESV.
\end{proposition}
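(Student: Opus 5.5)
Tightness lets us cover almost all of $\Omega$ by countably many compact sets, and compact metric spaces are separable, so Lemma \ref{measurable functions on separable metric spaces are ESV} applies on each piece. A countable union of separable sets is separable, which gives the ESV property.

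\textbf{Step 1: the compact cover.} For each $n \in \N$, tightness supplies a compact $K_n$ with $\mathbb P(\Omega \setminus K_n) < 1/n$. Put $\Omega_1 := \bigcup_{n} K_n$. This set is Borel, hence in $\mathcal F$. Its complement $\Omega_0 := \Omega \setminus \Omega_1$ satisfies $\mathbb P(\Omega_0) \le \mathbb P(\Omega\setminus K_n) < 1/n$ for every $n$, so $\Omega_0$ is a null set.

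\textbf{Step 2: each piece is ESV.} Each $K_n$, as a compact metric space, is totally bounded and therefore separable. Give $K_n$ the restricted $\sigma$-algebra $K_n \cap \mathcal F$, which contains the Borel sets of $K_n$, and the restricted finite measure $\mathbb P|_{K_n}$. If $\mathbb P(K_n) > 0$, normalize it to a probability measure; if $\mathbb P(K_n)=0$, the piece contributes nothing and can be absorbed into the null set. The restriction $v|_{K_n}$ is still Borel measurable. Lemma \ref{measurable functions on separable metric spaces are ESV} then gives a null set $N_n \subseteq K_n$ such that $v(K_n \setminus N_n)$ is separable. That lemma's proof only counts Borel sets, so it does not depend on the normalization.

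\textbf{Step 3: assembling.} Let $N := \Omega_0 \cup \bigcup_n N_n$. This is a countable union of null sets, so it is null. Moreover
\[
v(\Omega\setminus N) \subseteq \bigcup_n v(K_n\setminus N_n),
\]
a countable union of separable subsets of the metric space $\mathcal H$. The union of their countable dense subsets is countable and dense in this union, and any subset of a separable metric space is separable. Hence $v(\Omega \setminus N)$ is separable and $v$ is ESV.

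\textbf{Main obstacle.} The only delicate point is Step 2, checking that the hypotheses of Lemma \ref{measurable functions on separable metric spaces are ESV} genuinely transfer to the subspace $K_n$. One needs the Borel sets of $K_n$ to be traces of Borel sets of $\Omega$, and measurability of $v|_{K_n}$ to follow from that of $v$. Both are standard facts about subspace topologies.

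If the lemma's cardinality argument feels shaky, there is a more direct route. Because $v(K_n)$ may fail to be compact, I would not use continuity. Instead I would rerun the lemma's counting argument directly on $K_n$: a separable metric space has at most $2^{\aleph_0}$ Borel sets, while an uncountable disjoint family of open sets meeting the image would produce at least $2^{\aleph_1}$ distinct preimages.
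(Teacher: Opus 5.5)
Steps 1--3 are a correct reduction to Lemma~\ref{measurable functions on separable metric spaces are ESV} and are essentially the paper's proof. The paper applies the lemma once to $K_\infty=\bigcup_n K_n$, which is $\sigma$-compact and hence already separable. Splitting into the $K_n$ and reassembling in Step~3 is harmless but unnecessary.

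Your closing ``more direct route'', however, is just the lemma's own argument, and it does not close as written. Non-separability of the image guarantees an uncountable $\varepsilon$-separated subset, and hence a disjoint family of open sets of size $\aleph_1$. It does not in general give one of size $\mathfrak c$. You then compare $2^{\aleph_1}$ distinct preimages with at most $2^{\aleph_0}$ Borel sets, but $2^{\aleph_0}<2^{\aleph_1}$ is not provable in ZFC: $\mathrm{MA}+\neg\mathrm{CH}$ gives $2^{\aleph_1}=2^{\aleph_0}$.

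This is not a formality. The count uses only separability of the domain, so if it were valid it would show that every Borel measurable image of a separable metric space is separable. That fails under $\mathrm{MA}+\neg\mathrm{CH}$. There, every $A\subseteq[0,1]$ with $|A|=\aleph_1$ is a Q-set (every subset is relatively $G_\delta$). So $a\mapsto e_a\in\ell^2(A)$ is Borel measurable on the separable metric space $A$, yet its range is non-separable.

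The count has a second defect: it treats the preimages as Borel, but they are only known to lie in $\mathcal F$. For the complete measures the paper works with, $\mathcal F$ can have $2^{\mathfrak c}$ members; the Lebesgue-measurable subsets of $[0,1]$ are an example. Your remark that the lemma ``only counts Borel sets'' has the same problem.

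With $\mathcal F$ merely assumed to contain the Borel sets, the statement is not a ZFC theorem at all. It is consistent, relative to a measurable cardinal, that Lebesgue measure extends to all subsets of $[0,1]$. That extension is tight, since $[0,1]$ is compact. Under it, $t\mapsto e_t\in\ell^2([0,1])$ is measurable, but it is not ESV, because countable sets are null.

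The same counting step is the heart of the paper's proof of the lemma. So your main argument is exactly as sound as the paper's and no more; the real content sits in that lemma. A genuine proof has to do two things. It must restrict $\mathcal F$ (for instance to the Borel sets or their completion). It must also replace the cardinality count with an argument that actually uses $\mathbb P$, or the descriptive structure of the compact pieces $K_n$.
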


\begin{proof}
    For each $n \in \N$, let $K_n$ be a compact set in $\mathcal F$ such that $\mathbb P(\Omega \setminus K_n) < \frac 1 n$. Let $K_\infty = \bigcup_{n \in \N} K_n$. $\mathbb P(K_\infty) = 1$ and is a separable metric space ($\sigma$-compact even) and $K_\infty \cap \mathcal F := \bset{K_\infty \cap E: E \in \mathcal F}$ contains the Borel $\sigma$-algebra on $K_\infty$. By Lemma \ref{measurable functions on separable metric spaces are ESV}, $v(K_\infty)$ is ESV. 
\end{proof}

\begin{comment}
If we assume that $\mathbb P$ is $\tau$-additive instead of simply countably additive, then we get ... $\mathbb P$ is said to be $\tau$-additive if whenever $\bset{E_\alpha}_{\alpha \in A}$ is a monotonic (decreasing) net of measurable subsets of $\Omega$, then $\lim_\alpha \Pr(E_\alpha) = \Pr \left( \cap_{\alpha \in A} E_\alpha \right)$. In particular, if each $\bset{E_\alpha}_{\alpha \in A}$ is decreasing and has measure 1, then $\Pr \left( \cap_{\alpha \in A} E_\alpha \right) = 1$. By passing to complements if $\bset{E_\alpha}_{\alpha \in A}$ is \textit{increasing} and has measure 0, then $\Pr \left( \cup_{\alpha \in A} E_\alpha \right) = 0$.
\end{comment}

\subsection{Naturality and Finite Dimensional Computation} \label{Naturality and Finite Dimensional Truncation}

Consider the category \textbf{Hilbert} of Hilbert spaces over $\R$ whose morphisms are norm-continuous linear operators. We see that for each measure space $(\Omega, \mathcal F, \mathbb P)$ the associations 

$$
    \mathcal H \mapsto L^2(\Omega, \mathcal H), \qquad
    \mathcal H \mapsto \text{HS}(\mathcal H, L^2(\Omega))
$$

are functorial over \textbf{Hilbert}. We denote these functors $L^2(\Omega, -)$ and $\text{HS}(-, L^2(\Omega))$ respectively. 

Let

$$\mathcal G \xrightarrow{S} \mathcal H \xrightarrow{T} \mathcal K$$

be a diagram of Hilbert spaces.The functor $L^2(\Omega, -)$ should send an operator $T: \mathcal H \to \mathcal K$ to the operator $\tilde T: L^2(\Omega, \mathcal H) \to L^2(\Omega, \mathcal K)$ where $\tilde T$ sends the function $\omega \mapsto v(\omega)$ in $L^2(\Omega, \mathcal H)$ to the function $\omega \mapsto T \circ v(\omega)$ in $L^2(\Omega, \mathcal K)$. We will abuse notation and use $T$ to describe both the operator $T: \mathcal H \to \mathcal K$ AND $\tilde T: L^2(\Omega, \mathcal H) \to L^2(\Omega, \mathcal K)$. Perhaps less obviously, the functor $\text{HS}(-,L^2(\Omega))$ must send the operator $T: \mathcal H \to \mathcal K$ to the map 
$ (- \circ T^*): \text{HS}(\mathcal H, L^2(\Omega)) \to \text{HS}(\mathcal K, L^2(\Omega))$, where we define $(- \circ T^*)H := HT^*$. That is, $(- \circ T^*)$ is the ``post-compose with $T^*$" operator on $\text{HS}(\mathcal H, L^2(\Omega))$. It is routine to show that $Tv \in L^2(\Omega, \mathcal K)$, $HT^* \in \text{HS}(\mathcal K, L^2(\Omega))$ and that these functors respect the composition law.

\begin{proposition}
    The correspondence $v \mapsto H_v$ is a \textit{natural} isomorphism between the functors $L^2(\Omega, -)$ and $\mathrm{HS}(-,L^2(\Omega))$. 
\end{proposition}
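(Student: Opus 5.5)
Write $\Phi_{\mathcal H}: L^2(\Omega, \mathcal H) \to \HS(\mathcal H, L^2(\Omega))$ for the map $v \mapsto H_v$. By Corollary \ref{surjectivity}, each component $\Phi_{\mathcal H}$ is already an isometric isomorphism. So it remains to verify naturality: for every morphism $T: \mathcal H \to \mathcal K$ in \textbf{Hilbert}, the square
$$\Phi_{\mathcal K} \circ T = (- \circ T^*) \circ \Phi_{\mathcal H}$$
commutes as maps $L^2(\Omega, \mathcal H) \to \HS(\mathcal K, L^2(\Omega))$. That is, we must show $H_{Tv} = H_v T^*$ for every $v \in L^2(\Omega, \mathcal H)$.

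The plan is a pointwise computation. First confirm that both sides are well defined. The functor discussion notes that $Tv \in L^2(\Omega, \mathcal K)$: it is ESV because $T(v(\Omega_0^C))$ is a continuous image of a separable set, it is weakly measurable because $\aset{Tv(\omega), y}_{\mathcal K} = \aset{v(\omega), T^*y}_{\mathcal H}$, and $\|Tv(\omega)\| \le \|T\|\,\|v(\omega)\|$. Hence $H_{Tv}$ is Hilbert–Schmidt by Theorem \ref{v induces a HS operator}. The operator $H_v T^*$ is Hilbert–Schmidt because a Hilbert–Schmidt operator composed with a bounded operator is Hilbert–Schmidt. Alternatively, this follows from the first fact together with the identity we are about to prove.

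Next fix $y \in \mathcal K$ and $\omega \in \Omega$ and compute
$$(H_{Tv}y)(\omega) = \aset{Tv(\omega), y}_{\mathcal K} = \aset{v(\omega), T^*y}_{\mathcal H} = (H_v T^* y)(\omega).$$
The identity holds for every $\omega$, so in particular $\mathbb P$-a.e. Therefore $H_{Tv}y = H_vT^*y$ in $L^2(\Omega)$ for all $y$, which gives $H_{Tv} = H_vT^*$.

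The main obstacle is not the computation but bookkeeping with equivalence classes. We need $v \mapsto Tv$ to be well defined on $\mathbb P$-a.e. classes, which is clear since $v = w$ a.e. implies $Tv = Tw$ a.e. We also need the ordering convention of the functor $\HS(-, L^2(\Omega))$ (precomposition by $T^*$) to be covariant: $(- \circ (UT)^*) = (- \circ U^*) \circ (- \circ T^*)$, because $(UT)^* = T^*U^*$. Once these are checked, the commuting square together with Corollary \ref{surjectivity} shows that $\Phi$ is a natural transformation with invertible components, hence a natural isomorphism. Its inverse $H \mapsto v_H$ is then automatically natural as well.
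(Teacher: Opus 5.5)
Your proposal is correct and follows the paper's proof essentially verbatim. Both invoke Corollary \ref{surjectivity} for the componentwise isomorphism and then check the naturality square pointwise via the adjoint identity $\langle Tv(\omega), k\rangle_{\mathcal K} = \langle v(\omega), T^*k\rangle_{\mathcal H}$, that is, $H_{Tv} = H_vT^*$. Your extra checks ($Tv \in L^2(\Omega,\mathcal K)$, $H_vT^*$ Hilbert--Schmidt, well-definedness on a.e.\ classes, and covariance of $(-\circ T^*)$) are exactly what the paper calls routine.
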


Let $\iota$ be this correspondence. To say that this correspondence is a \textit{natural} isomorphism is to say that $\iota$ is an isomorphism of Hilbert spaces (which we've stated as Corollary \ref{surjectivity}), and that for every  $T \in \mathcal B(\mathcal H, \mathcal K)$, we get a commutative square 

\[
\begin{tikzcd}
L^2(\Omega, \mathcal H) 
\arrow[r, "T"] \arrow[d, "\mathbf \iota"] & 
L^2(\Omega, \mathcal K) \arrow[d, "\iota"] \\
\text{HS}(\mathcal H, L^2(\Omega) ) \arrow[r, "(-\circ T^*)"] & 
\text{HS}(\mathcal K, L^2(\Omega) )
\end{tikzcd}
\]

\begin{proof}
We verify the details with a ``diagram chase", i.e. we must verify that for every $v \in L^2(\Omega, \mathcal H)$ and every $k \in K$ we have $\iota(Tv)k = ((-\circ T^*)\iota(v))k$

\begin{align*}
    \iota(Tv)k &= H_{Tv}k
    \\&= 
    \aset{Tv, k}_{L^2(\Omega)}
    \\&= 
    \aset{v, T^*k}_{L^2(\Omega)}
    \\&=
    H_v(T^*k)
    \\&=
    (-\circ T^*)H_vk
    \\&=
    ((-\circ T^*)\iota(v))k
\end{align*}
\end{proof}

The importance of this natural isomorphism arises in computational purposes. In computation, we cannot work with infinite dimensional Hilbert spaces. In less formal words, we would like to ensure that, given a finite dimensional subpace $\mathcal H_M$ of $\mathcal H$, that projecting $v \in L^2(\Omega, \mathcal H)$ onto $L^2(\Omega, \mathcal H_M)$ is ``the same" as starting off in $L^2(\Omega, \mathcal H_M)$. 

Likewise, when dealing with the operator $H_{v_0} \in \text{HS}(\mathcal H, L^2(\Omega))$, we want to ensure that projecting $H_{v_0}$ onto $\text{HS}(\mathcal H_M, L^2(\Omega))$ is ``the same" as starting off in $\text{HS}(\mathcal H_M, L^2(\Omega))$. These concerns are answered using the language of category theory that we've thus far developed. Here we put $\mathcal K = \mathcal H_M$ and $T = \mathbf P_{\mathcal H_M} = $ the projection onto $\mathcal H_M$ operator.  

It is not much harder to show that the isometric isomorphism $Xe \leftrightarrow X \otimes e$ induces a natural isomorphism between the functors $L^2(\Omega, -)$ and $L^2(\Omega) \otimes -$, which in this case is also an isometry. The language of natural isomorphisms allows a concise proof of the optimal $M$-term truncation property of KLEs as in Theorem \ref{Optimal M term truncation Property}.

\begin{proof}[Proof of Theorem \ref{Optimal M term truncation Property}]
	Let $v \in L^2(\Omega, \CH)$ have \eqref{KLE of v} as its KLE. Put $\mathcal Y = \Span{Y_r}_{r=1}^R$ and $\Phi = \Span{\phi_r}_{r=1}^R$. We first demonstrate that may first assume that $\mathcal S \subseteq \Phi$. By decomposing $\mathbf P_\mathcal S$ as $\mathbf P_{S \cap \Phi} + \mathbf P_{\mathcal S \cap \Phi^\perp}$ we have 
	
	$$\mathbf P_\mathcal S v_0 = \mathbf P_{S \cap \Phi}v_0 + \mathbf P_{\mathcal S \cap \Phi^\perp}v_0 = \mathbf P_{S \cap \Phi}v_0$$
	
	whence 
	
	$$\|(\id_\CH - \mathbf P_\mathcal S)v_0 \|_{L^2(\Omega, \CH)}^2 = \|(\id_\CH - \mathbf P_{\mathcal S \cap \Phi})v_0 \|_{L^2(\Omega, \CH)}^2$$

	But $\dim(\mathcal S \cap \Phi) = M$ iff $\mathcal S \subseteq \Phi$.  If $\mathbf P_{\mathcal S}: \CH \to \mathcal S$ is the projection onto $\mathcal S$ operator, then the naturality of the identification $\iota$ given by $Xe \leftrightarrow X \otimes e$ between $L^2(\Omega, -)$ and $L^2(\Omega) \otimes -$ implies
	
	\begin{align*}
	&\|v_0 - \mathbf P_{\mathcal S} v_0\|^2_{L^2(\Omega, \CH)} 
	\\&= 
    \|\iota(v_0) - \mathbf P_{L^2(\Omega) \otimes \mathcal S} \iota(v_0)\|^2_{L^2(\Omega) \otimes \CH}
	\\&= 
	\|\iota(v_0) - \mathbf P_{\mathcal Y \otimes \mathcal S} \iota(v_0)\|^2_{\mathcal Y \otimes \Phi}
	\end{align*}

	 Because $\bset{\lambda_r, Y_r, \phi_r}_{r=1}^R$ obeys properties 1-4 of Definition \ref{Definition of KLE of v}, and $\mathcal Y$ and $\Phi$ are separable Hilbert spaces, we may refer to Thm. 2.7 of \cite{Schwab2006}, which states that the minimizer $\mathcal S$ of $\|\iota(v_0) - \mathbf P_{\mathcal Y \otimes \mathcal S} \iota(v_0)\|^2_{\mathcal Y \otimes \Phi}$ over all subspaces of dimension $M$ is given by $\Phi_M := \Span{\phi_r}_{r=1}^M$. $\Phi_M$ must also be the minimizer of $\|v_0 - \mathbf P_{\mathcal S} v_0\|^2_{L^2(\Omega, \CH)}$ over all subspaces $\mathcal S$ of dimension $M$. Finally, it is straightforward to compute that $	\|v_0 - \mathbf P_{\Phi_M} v_0\|^2_{L^2(\Omega, \CH)} = \D \sum_{r > M} \lambda_r$.
	
	

\end{proof}

\begin{comment}
Thus the squares

\[
\begin{tikzcd}
\mathcal H 
\arrow[r, "L^2(\Omega{,}-)"] \arrow[d, "\mathbf P_{\mathcal H_M}"] & 
L^2(\Omega, \mathcal H) \arrow[d, "\mathbf P_{\mathcal H_M}"] \\
\mathcal H_M \arrow[r, "L^2(\Omega{,}-)"] & 
L^2(\Omega, \mathcal H_M)
\end{tikzcd}
\qquad
\begin{tikzcd}
\mathcal H 
\arrow[r, "\text{HS}(-{,}L^2(\Omega))"] \arrow[d, "\mathbf P_{\mathcal H_M}"] & 
\text{HS}(\mathcal H, L^2(\Omega)) \arrow[d, "\mathbf P_{\mathcal H_M}"] \\
\mathcal H_M \arrow[r, "\text{HS}(-{,}L^2(\Omega))"] & 
\text{HS}(\mathcal H_M, L^2(\Omega))
\end{tikzcd}
\]

commute. Furthermore, the correspondence $\iota(v) = H_v$, where $H_v$ is the operator $h \mapsto \aset{h,v}_{\mathcal H}$ is a natural isomorphism between these two functors. 

\[
\begin{tikzcd}
L^2(\Omega, \mathcal H) \arrow[r, "T"] \arrow[d, "\iota"] & 
L^2(\Omega, \mathcal K) \arrow[d, "\iota"] \\
\text{HS}(\mathcal H, L^2(\Omega)) \arrow[r, "(-\circ T^*)"] & 
\text{HS}(\mathcal K, L^2(\Omega))
\end{tikzcd}
\]

We readily check for each $v \in L^2(\Omega, \mathcal H)$, and for each $h \in \mathcal H$: 

\begin{align*}
    (\iota \circ T(v))h &= 
    H_{Tv}h
    \\&=
    \aset{h, Tv}_{\mathcal H}
    \\&=
    \aset{T^*h, v}_{L^2(\Omega)}
    \\&= 
   (H_vT^*)h
    \\&= 
     ((-\circ T^*)\iota(v))h
\end{align*}
\end{comment}
\section{Example: A Vector Field KLE (Japanese Mortality Data)}

We include an example where computing KLE in a general Hilbert space offers an advantage over the usual $L^2(\mathcal T)$ case. The Japanese mortality data, available at \cite{japan2023}, include the number of deaths per year in 47 prefectures of Japan from the years 1947-2023. The deaths are further subdivided by age, ranging from 0-110+ (individuals over 110 years of age are grouped together). As part of an experimental study on a dimensionality reduction pipeline, Gao et. al. (\cite{gao2017}) implement KL transform theory in $L^2(\mathcal T)$ and factor analysis on this dataset. (\textit{Remark}: At the time of publication, only the years 1947-2015 were available at \cite{japan2023}.) Gao et. al. treat the Japanese mortality data as a \textit{functional time series} - a sequence $\pmb{\mathcal X}_1, \dots \pmb{\mathcal X}_T$, the subscripted index corresponding to time. Here $\pmb{\mathcal X}_t = (\mathcal X_t^1(u), \dots \mathcal X_t^Q(u))$, $1 \leq t \leq T$, each $\mathcal X_t^i$ itself an element of $L^2(\mathcal T)$ where $\mathcal T $ is an interval of $\R$. For this particular data set $\mathcal T = [0, 110]$. The temporal index $t$ corresponds to the year 1947-2015, the spatial variable $u$ corresponds to age (0-110+), and the superscripted indices $1,...,Q$ correspond to each of the 47 prefectures.

In performing dimension reduction via the KLE, Gao et. al. ``ignore" the temporal significance of the subscripted index $t$, treating each $\pmb{\mathcal X}_t$ as a realization of a random element of the space $L^2(\mathcal T, \R^Q)$ - meaning for each $1 \leq t \leq T$ and for each $1 \leq q \leq Q$ $\mathcal X_t^q$ denotes a realization of an element of $L^2(\mathcal T)$, the space in which the KLE is typically practiced. From there, each $\mathcal X_t^q$ is replaced with the sum of its first $R_0$ KLE terms, for some suitable $R_0 \in \N$. 

In this set up, we may alternatively identify the functional time series $\pmb{\mathcal X}_1, \dots \pmb{\mathcal X}_T$ with $T$ realizations of an element $v \in L^2(\Omega, \mathcal H)$ where $\mathcal H = L^2(\mathcal T, \R^{47})$. For our example, we make two modifications: we reduce our scope from all 47 prefectures to the first 5 (Hokkaido, Aomori, Iwate, Miyagi, and Akita). We furthermore replace $L^2(\mathcal T, \R^Q)$ with $\mathcal H = \ell^2(\mathcal I, \R^5 )$ where $\mathcal I = \bset{0,1,\dots, 110}$ eliminating the need to account for the error incurred by approximating each mortality curve as a real-valued function sampled discretely at ${0,1, \dots 110}$. In this updated setup, we have $v(\omega, i) = (v^1(\omega,i), \dots v^5(\omega, i))$. Here $\omega \in \Omega$ denotes the stochastic variable and $i \in \mathcal I$ the ``spatial" variable, corresponding to age of mortality, and each $v^q$, $1 \leq q \leq 5$ describes an element of $L^2(\Omega, \ell^2(\mathcal I))$. Proceeding as Gao et. al., we compute for each $1 \leq q \leq 5$, the $R_0$th KL trucnation of the $q$th component $v^q$, which we denote $v^q_{R_0}$, giving 

\begin{align*}
    v^1_{R_0{}} &= \E(v^1) + \sum_{r=1}^{R_0} \lambda_{1r}^{1/2} Y_{1r} \phi_{1r} 
    \\ &\vdots \\
    v^5_{R_0} &= \E(v^5) + \sum_{r=1}^{R_0} \lambda_{5r}^{1/2} Y_{5r} \phi_{5r}  
\end{align*}

$\bset{\lambda_{qr}} \in \R$, $\bset{Y_{qr}} \in L^2(\Omega)$, $\bset{\phi_{qr}} \in \ell^2(\mathcal I)$. Hence we define

\begin{equation} \label{Component-wise KLE}
\begin{split}
    v_{R_0} &= 
\sum_{q=1}^5 \left( \E(v^q) + \sum_{r=1}^{R_0} \lambda_{qr}^{1/2} Y_{qr} \phi_{qr} \right) \mathbf e_q
\\&=
\E(v) + \sum_{q=1}^5  \sum_{r=1}^{R_0} \lambda_{qr}^{1/2} Y_{qr} (\phi_{qr} \mathbf e_q)
\end{split}
\end{equation}

where $\bset{\mathbf e_q}_{q=1}^5$ is the standard basis of $\R^5$. We will denote truncation \eqref{Component-wise KLE} as the \textit{component-wise KL truncation} of $v$. 

The collection $\bset{\phi_{qr}\mathbf e_q}$ form an ONB of an $(R_0 \times 5)$-dimensional subspace $\mathcal S$ of $\ell^2(\mathcal I, \R^5)$, hence \eqref{Component-wise KLE} represents an orthogonal projection of the centered $v$ onto $\mathcal S$. Now, let 

\begin{equation} \label{Vector-field KL}
\tilde v_{R_0} = \E(v) + \sum_{r=1}^{R_0 \times 5} \tilde{\lambda_r}^{1/2} \tilde{Y_r} \tilde{\phi_r}
\end{equation}

denote the $(R_0 \times 5)$th KL truncation of $v$, this time treating $v$ as an element of $L^2(\Omega, \ell^2(\mathcal I, \R^5))$. (\textit{Remark:} $\bset{\tilde{\phi_r}}$ describe elements of $\ell^2(\mathcal I, \R^5)$.) We will denote truncation \eqref{Vector-field KL} as the \textit{vector field KL truncation} of $v$. 

By Theorem \ref{Optimal M term truncation Property}, the mean-squared error incurred by truncation \eqref{Vector-field KL} must be less than that of \eqref{Component-wise KLE}, i.e. $\|v-\tilde v_{R_0}\|^2_{L^2(\Omega, \mathcal H)} \leq \|v-v_{R_0}\|^2_{L^2(\Omega, \mathcal H)}$. We confirm this with experiments, observing that the relative squared error of the component-wise KL truncation - i.e. the quotient $\dfrac{\|v - v_{R_0}\|^2_{L^2(\Omega, \mathcal H)} }{ \|v\|^2_{L^2(\Omega, \mathcal H)} }$ - is considerably less than that of the vector field KL truncation - i.e. the quotient $\dfrac{\|v - \tilde v_{R_0}\|^2_{L^2(\Omega, \mathcal H)} }{ \|v\|^2_{L^2(\Omega, \mathcal H)} }$ (see Figure \ref{JMD figure}). In fact the component-wise KL truncation requires 30 terms to obtain a similar relative squared error as a 5 term vector field KL truncation. Our results demonstrate that the component-wise expansion implemented by Gao in \cite{gao2017} is suboptimal in the $\|\cdot\|_{L^2(\Omega, \mathcal H)}^2$ sense.

\begin{figure}[H]
    \centering
\includegraphics[width=0.95\linewidth]{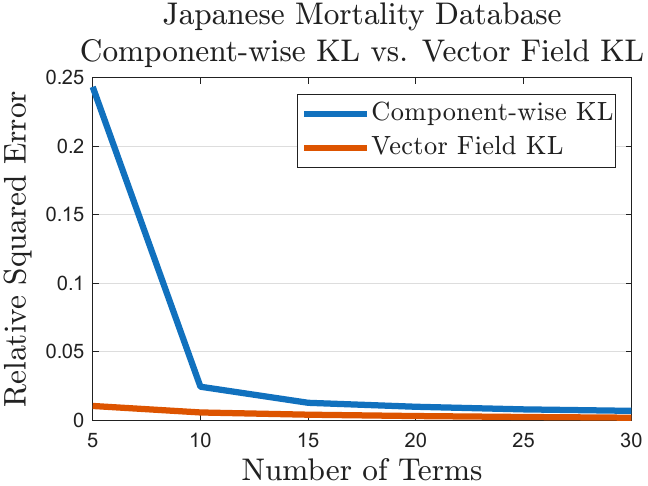}
    \caption{Comparison of the relative mean squared error incurred by the component-wise KL truncation and vector field KLE for six different values of the truncation $R_0$.}
    \label{JMD figure}
\end{figure}
\section{Conclusion}

The Karhunen-Lo\`eve expansion of a function $v: \Omega \to L^2(\mathcal T)$ is a well known method for decomposing a square-summable stochastic process into a countable number of simple components with provable optimality properties. While most of the generalizations of the KLE focus on square-summable functions of the form $v: \Omega \to \mathcal H$ where $\mathcal H$ is a Hilbert space of scalar or $\R^Q$-valued functions, the existence of an expansion for arbitrary - even non-separable - $\mathcal H$ can be demonstrated by appealing to the isomorphism between the spaces $L^2(\Omega, \mathcal H)$ and $\text{HS}(\mathcal H, L^2(\Omega))$ induced by the natural isomorphism between the functors $L^2(\Omega, -)$ and $\text{HS}(-, L^2(\Omega))$ allowing for a more complete, cohesive theory. We furthermore demonstrate that the separability assumptions usually placed on the image of $v$ isn't just a computational convenience, but a necessary condition for the KLE of $v$ to hold. The usefulness of such a general view of the KLE holds in the capability to represent a multitude data types corresponding to different Hilbert spaces, from lists, to time series, to images, to solutions of PDEs.

\appendix

\section{The Failure of the Categorical Tensor Product}
\label{The Failure of the Categorical Tensor Product}

We demonstrate in this section that the ``tensor product" as described in \ref{Properties of the KL-Expansion} fails to be a tensor product in the category \textbf{Hilbert}. We adopt the example from \cite{garrett2010}. Let $\CH$ be a separable Hilbert space with ONB $\bset{e_n}_{n=1}^\infty$ and suppose we have the commutative diagram as below

\begin{equation} \label{Tensor Product Commutative Diagram}
\begin{tikzcd} 
\CH \times \CH \arrow[r, "b"] \arrow[d, "\otimes"'] & \R \\
\CH \otimes \CH \arrow[ur, "\varphi"'] & 
\end{tikzcd}
\end{equation}

where $b$ is the continuous bilinear map $(u,v) \mapsto \aset{u,v}_\CH$, $\otimes$ is the bilinear map $(u,v) \mapsto u \otimes v$, and $\varphi$ is the linear map $u \otimes v  \mapsto \aset{u,v}_\CH$. 

Diagram \ref{Tensor Product Commutative Diagram} exists and commutes in the category of vector spaces over $\R$. However, when each object in \ref{Tensor Product Commutative Diagram} is endowed with its inner product structure, the maps $\otimes$ and $\varphi$ fail to be continuous. 

To demonstrate that $\otimes$ is not continuous, define for each $N \in \N$,  $T_N \in 
\CH \times \CH$ by $\sum_{n=1}^N \frac{1}{n} (e_n, e_n)$. While $ \lim_{N \to \infty} T_N \to \sum_{n=1}^\infty \tfrac 1 n (e_n, e_n)$, a well-defined element of $\CH \times \CH$, we have that $\lim_{N \to \infty} \|\varphi(T_N)\|_{\CH \otimes \CH} = \sum_{n=1}^\infty \tfrac 1 n = \infty$. 

Further, define for each $N \in \N$, $S_N \in \CH \otimes \CH$ by $S_N := \ \sum_{n=1}^N \frac 1 n e_n \otimes e_n$. Then $S_N$ converges to the well-defined element of $\CH \otimes \CH$ given by $\ \sum_{n=1}^\infty  \frac 1 n e_n \otimes e_n $. However, $\lim_{N \to \infty} |\varphi(S_N)| = \sum_{n=1}^\infty \frac 1 n = \infty$. 

\newpage 

\section{Notation} \label{Notation}

\begin{center}
\begin{tabular}{|C{4 em}|p{15 em}|}
	\hline
	Symbol & Meaning \\
	\hline
	$\E$ & Expectation \\
	$\mathcal H$, $\mathcal G$ & Hilbert spaces \\
	$\aset{\cdot, \cdot}_\mathcal H$ & Inner product on $\mathcal H$ \\
	$\|\cdot \|_\mathcal H$ & Norm on $\mathcal H$ \\
	$\mathcal S^\perp$ & Orthogonal complement of the subspace $\mathcal S$ \\
	$\Omega$ & Sample space \\
	$\mathcal F$ & $\sigma$-algebra of events \\
	$\mathbb P$ & Probability measure \\
	$L^2(X)$ & Set of (equivalence classes) of square summable, measurable  functions $X \to \R$ for some measure space $X$ \\
	$L^2(X, \R^Q)$ & Set of (equivalence classes) of square summable, measurable functions $X \to \R^Q$ \\
	$\mathbb I_E$ & Indicator function of $E$ \\
	$L^2(\Omega, \mathcal H)$ & Set of (equivalence classes) of square summable, Bochner-measurable functions $\Omega \to \mathcal H$ \\
	$\ell_2(X)$, $\ell_2(X, \R^Q)$ & $L^2(X)$, $L^2(X, \R^Q)$ respectively with counting measure \\
	$v$ & element of $L^2(\Omega, \mathcal H)$ \\
	$v_0$ & $v - \E(v)$ \\
	$\int_E v \; d \mathbb P$ & Bochner integral of $v$ over $E$ \\
	$\text{HS}(\mathcal H, \mathcal G)$ & Set of Hilbert-Schmidt operators $\mathcal H \to \mathcal G$ \\
	$H_v$ & Operator $\CH \to L^2(\Omega)$: $x \mapsto \aset{x,v}_\mathcal H$ \\
	$R$ & Rank of $H_v$ \\
	$\bset{\lambda_r}_{r=1}^R$ & Eigenvalues of $v$ \\
	$\bset{\phi_r}_{r=1}^R$ & Right singular vectors of $v$ \\
	$\bset{Y_r}_{r=1}^R$ & Stochastic components of $v$ \\
	$\mathcal B(\mathcal H, \mathcal G)$ & Set of bounded linear operators $\mathcal H \to \mathcal G$ \\
	$\mathcal B_0(\mathcal H, \mathcal G)$ & Set of compact linear operators $\mathcal H \to \mathcal G$ \\
	$\mathcal B_{00}(\mathcal H, \mathcal G)$ & Set of finite rank linear operators $\mathcal H \to \mathcal G$ \\
	$g \widehat{\otimes} h$ & Outer product of $g$ and $h$, a rank one operator $x \mapsto \aset{x,h}g$ \\
	$\mathbf e_q$ & $q$th standard basis vector of $\R^Q$ \\
	$\mathbf P_\mathcal S$ & Orthogonal projection onto the closed subspace $\mathcal S$ operator \\
	$\id_\mathcal H$ & Identity on $\mathcal H$ operator \\
	$\mathcal H \otimes \mathcal G$ & Hilbert space tensor product of $\mathcal H$ and $\mathcal G$ \\
	$\delta_{mn}$ & Kronecker delta function of $m$ and $n$ \\
	\hline
\end{tabular}
\end{center}

\vspace{60pt}
\printbibliography
\end{multicols*}

\end{document}